\font\chuto=cmbx10 at 16pt \font\kamy=lcmssb8
\newtheorem{theorem}{Theorem}[section]
\newtheorem{proposition}[theorem]{Proposition}
\newtheorem{definition}[theorem]{Definition}
\newtheorem{corollary}[theorem]{Corollary}
\newtheorem{example}[theorem]{Example}
\newtheorem{remark}[theorem]{Remark}
\numberwithin{equation}{section}
\begin{document}
\vskip1.5cm

\centerline {\bf \chuto New generalization of geodesic convex function }

\vskip.2cm

\centerline {\bf \chuto  }


\vskip.8cm \centerline {\kamy Ohud Bulayhan Almutairi $^a$\footnote{{\tt Corresponding author email:  Ohudbalmutairi@gmail.com ( Ohud Bulayhan Almutairi)}} and Wedad Saleh  $^{b}$} 

\vskip.5cm

\centerline {$^a$ Department of Mathematics, University of Hafr Al-Batin, Hafr Al-Batin 31991, Saudi Arabia}
\centerline {$^b$ Department of Mathematics, Taibah University, Al- Medina, Saudi Arabia  }



\vskip.5cm \hskip-.5cm{\small{\bf Abstract :}  
As a generalization of geodesic function, this paper introduces the notion of geodesic $ \varphi_{E} $-convex function. Some properties of $ \varphi_{E} $-convex function and geodesic $ \varphi_{E} $-convex function are established. The concepts of geodesic $ \varphi_{E} $-convex set and $ \varphi_{E} $-epigraph are equally given. The characterization of geodesic $ \varphi_{E} $-convex functions in terms of their $ \varphi_{E} $-epigraphs are also obtained.
\vskip0.3cm\noindent {\bf Keywords:} Riemannian manifolds; Geodesic; $E$-convex sets; $ \varphi_{E} $-convex function; Geodesic $ \varphi_{E} $-convex function 


\hrulefill


\section{Introduction}
\hskip0.6cm
Convex functions have been widely used in many branches of mathematics; for example, they can be seen in mathematical analysis, function theory, functional analysis and optimization theory \cite{Boltyanski,Danzer,AdemWedadEb2015,KolwankarGangal1999,ozdemir2013some}. Thus, the function can be defined as follows:\\

 A function $ h:U\subseteq \mathbb{R}\longrightarrow \mathbb{R} $ is  a convex if 
\begin{equation}\label{eq1}
h(\eta u_{1}+(1-\eta)u_{2})\leq \eta h(u_{1})+(1-\eta)h(u_{2}), \ \ \forall u_{1}, u_{2}\in U, \ \  \eta\in [0,1].
\end{equation}
If the inequality sign in (\ref{eq1}) is reversed then $ h $ is called  a concave on the set $ U $.


 For example in economics, for a production function $ u=h(L) $ , the concavity of $ h $ is expressed by saying that $ h $ exhibits diminishing returns. If $ h $ is convex, then  it exhibits increasing returns. On the other hand, many new problems in applied mathematics are encountered where the notion of  convexity is not enough to describe them in order to reach favourite results. For this reason, the concept of convexity has been extended and generalized in several studies, see \cite{OK,GrinalattLinnainmaa2011,Syau,kS,RuelAyres1999,Awan}. Due to the curvature and torsion of a Riemannian manifold, high non-linearity  appears in the study of convexity on such a manifold. Geodesics are length minimizing curves and the notion of a geodesic convex function arises naturally in a complete Riemannian manifold, which has been recently studied in \cite{Greene1981Convex,Udrist1994}.\\

  In 1999, an important generalization of the convex function, called the $ E $-convex function, was defined by Youness  \cite{Youness1999}. This type of function  has  some  applications in various branches of mathematical sciences \cite{Abou1999inequalities,Noor1994fuzzy}. On the other hand, Yang \cite{Yang2001} showed that  some results given by Youness  \cite{Youness1999} seem to be incorrect. Therefore, Chen \cite{Chen2002} extended E-convexity to a semi E-convexity and discussed some of its properties. For more results on E-convex function and semi $ E $-convex function one consult the following references  \cite{FulgaPreda,Iqbal,IAA2012,SyauLee2005}.  In 2012, Iqbal et al.\cite{IAA2012} introduced and studied a new class of convex sets together with functions that are called geodesic $ E $-convex sets and geodesic E-convex functions on Riemannian manifolds, these were extended to the  geodesic strongly $ E $-convex sets and geodesic strongly $ E $-convex functions in 2015 by Adem and Saleh \cite{AW}. Also, Iqbal et al.\cite{Iqbal} introduced 
   geodesic semi $ E $-convex funcions. Following these developments, Adem and Saleh \cite{AdemWedadEb2015} introduced
    geodesic semi $ E $-$ b $-vex (GSEB) functions through which some properties were discussed.\\
  
 Other developments include the work of Eshagh et. al. \cite{Gordji2016}, who introduced the notion of a $ \varphi $-convex function in 2016. They equally studied Jensen and Hermite-Hadamard type inequalities related to this function. Moreover, the notion of  $ \varphi_{E} $-convex functions was defined as the generalization of $ \varphi $-convex functions. Absos et al. further introduced the notion of a geodesic $ \varphi $- convex function through which some basic properties of this function were studied \cite{Shaikh2018phicomvex}.\\

   The paper is organized as follows. Section 2 deals with the rudimentary facts of convex functions and convex sets. Section 3 is devoted to the study of some properties of $ \varphi_{E} $-convex functions. In Section 4, we discuss a new class of functions on Riemannian manifolds, which is called the geodesic $ \varphi_{E} $- convex function. Some of the properties of this function are also studied. In Section 5, the characterization of geodesic $ \varphi_{E} $-convex functions through their corresponding $ \varphi_{E} $-epigraphs is reported.
\section{Preliminaries}
This section provide some definitions and properties that can be later used in the study in order to report our results. Thus, several definitions and properties of the real number set and the Riemannian manifold can be found in many differential geometry books and papers \cite{Udrist1994}. 
Throughout this paper, we consider an interval $ U=[u_{1},u_{2}] $ in $ \mathbb{R} $ and $  \varphi\colon \mathbb{R} \times \mathbb{R} \longrightarrow \mathbb{R} $  is a bifunction.
\begin{definition}\cite{Gordji2016}
	A function $ h: U\longrightarrow\mathbb{R} $ is called $ \varphi $-convex if 
	\begin{equation}\label{eq2}
	h(tu_{1}+(1-t)u_{1})\leq h(u_{2})+t\varphi(h(u_{1}), h(u_{2}),
	\end{equation}
	$ \forall u_{1},u_{2}\in U , t\in [0,1] $
\end{definition}
 In the above definition, if $ \varphi(h(u_{1}), h(u_{2}))=h(u_{1})-h(u_{2}) $, then inequality(\ref{eq2}) becomes  inequality (\ref{eq1}).
\begin{definition}\cite{Gordji2016}
	The function $  \varphi\colon \mathbb{R} \times \mathbb{R} \longrightarrow \mathbb{R} $ is said to be 
\begin{enumerate}
	\item   nonnegatively homogeneous if $$ \varphi(tu_{1},tu_{2})=t\varphi(u_{1},u_{2}),\forall u_{1},u_{2}\in \mathbb{R}, t\geqslant 0. $$
	\item additive if $$ \varphi(u_{1}+v_{1},u_{2}+v_{2})=\varphi(u_{1},u_{2})+\varphi(v_{1},v_{2}),\forall u_{1},u_{2},v_{1},v_{2}\in \mathbb{R}. $$
	\item nonnegatively linear if  $ \varphi $ is both nonnegatively homogeneous and additive.
\end{enumerate}
\end{definition}

 \begin{definition}\cite{Youness1999}
 A set $ U\subset \mathbb{R}^{n} $ is called E-convex set if there is a mapping $ E:\mathbb{R}^{n}\longrightarrow \mathbb{R}^{n} $ such that $$ t(E(u_{1}))+(1-t)E(u_{2})\in U,\ \ \forall u_{1},u_{2}\in U, t\in [0,1] $$
 \end{definition}
\begin{definition}\cite{Gordji2016,Youness1999}
Assume that $ U\subset \mathbb{R}^{n} $ is an E-convex set, then the function $ h:U\longrightarrow \mathbb{R} $ is called
\begin{enumerate}
	\item an E-convex function, if 
		\begin{equation}
	h(tE(u_{1})+(1-t)E(u_{1}))\leq th(E(u_{1}))+(1-t) h(E(u_{2})),
	\end{equation}
	$ \forall u_{1},u_{2}\in U , t\in [0,1] $
	\item $ \varphi_{E} $-convex function, if 
		\begin{equation}\label{eq3}
	h(tE(u_{1})+(1-t)E(u_{1}))\leq h(E(u_{2}))+t\varphi(h(E(u_{1})), h(E(u_{2})),
	\end{equation}
	$ \forall u_{1},u_{2}\in U , t\in [0,1] $
\end{enumerate}
\end{definition}
If $ \varphi(h(E(u_{1})), h(E(u_{2}))= h(E(u_{1}))- h(E(u_{2}))$ in inequality(\ref{eq3}), then we obtain the E-convex function.\\

Now, let $ (N,g) $ be a complete $ m $-dimensional Riemannian manifold with Riemannian connection $ \bigtriangledown $. Given a piecewise $ C^{1} $ path $ \gamma:[\mu_{1},\mu_{2}]\longrightarrow N $ joining $ a_{1} $ to $ a_{2} $, that is, $ \gamma(\mu_{1})=a_{2} $ and $ \gamma(\mu_{2})=a_{1} $, the length of $ \gamma $ is defined by $$L(\gamma)=\int_{u_{1}}^{u_{2}}\|\acute{\gamma}(\lambda)\|_{\gamma(\lambda)}d\lambda.$$
For any two points $ a_{1},a_{2}\in N $, we define
$$d(a_{1},a_{2})=\inf\left\lbrace L(\gamma): \gamma \,\,\, \rm {is\, a\, piecewise\, C^{1} \,path\, joining}\,\,\, a_{1} \ {\rm to} \ a_{2} \right\rbrace. $$ Then $ d $ is a metric which induces the original topology on $ N $. \\

For every Riemannian manifold there is a unique determined Riemannian connection, called a Levi-Civita connection, denoted by $ \bigtriangledown_{X}Y $, for any vector fields $ X,Y \in N $. Also, a smooth
path $ \gamma  $is a geodesic if and only if its tangent vector is a parallel vector field along the path
$ \gamma $, i.e., $ \gamma  $ satisfies the equation $ \bigtriangledown_{\gamma'}\gamma'=0 $. Any path $ \gamma $ joining $ \mu_{1} $ and $ \mu_{2} $ in $ N $ such that
$ L(\gamma)=d(\mu_{1},\mu_{2}) $ is a geodesic and is called a minimal geodesic. Finally, let $ N $ as  a $ C^{\infty} $ complete $ n $-dimensional Riemannian manifold with metric $ g $ and Levi-Civita connection $ \bigtriangledown $. Moreover, considering that the points $ \mu_{1},\mu_{2}\in N $ and $ \gamma\colon[0,1]\longrightarrow N $ is a geodesic joining $ \mu_{1},\mu_{2} $, i.e., $ \gamma_{\mu_{1},\mu_{2}}(0)=\mu_{2} $ and $ \gamma_{\mu_{1},\mu_{2}}(1)=\mu_{1} $.
\begin{definition}\cite{Nicolaescu2020}
 Assume that $ N_{1}, N_{2} $ are smooth manifolds. A map $ h:N_{1}\longrightarrow N_{2} $ is a diffeomorphism if it is smooth, bijective, and the inverse $ h^{-1} $ is smooth.
\end{definition}
\begin{definition}\cite{Udrist1994}.
	A subset $ U\subseteq N $ is called t-convex if and only if $ U $ contains every geodesic $ \gamma_{\mu_{1},\mu_{2}} $ of $ N $ whose endpoints $ \mu_{1} $ and $ \mu_{2} $ are in $ B $.
\end{definition}
\begin{remark}
	If $ U_{1} $ and $ U_{2} $ are t-convex sets, then $ U_{1}\cap U_{2}$ is t-convex set , but $ U_{1}\cup U_{2} $ is not necessarily t-convex set.
\end{remark}
\begin{definition}\cite{Udrist1994}.
	A function $ h: U\subset N\longrightarrow \mathbb{R} $ is called  geodesic convex if and only if for all geodesic arcs $ \gamma_{\mu_{1},\mu_{2}} $, then $$h(\gamma_{\mu_{1},\mu_{2}}(t))\leq t h(\mu_{1})+(1-t)h(\mu_{2})$$ for each $ \mu_{1},\mu_{2}\in U $ and $t\in [0,1] $.
\end{definition}


\begin{definition}\cite{IAA2012}
	A set $ U\subset N $ is  geodesic E-convex where $ E: N\longrightarrow N $, if and only if there exists a unique geodesic $ \gamma_{E(\mu_{1}),E(\mu_{2})}(t)  $of length $ d(\mu_{1},\mu_{2}) $ which belongs to $ U $ for every $ \mu_{1},\mu_{2}\in U $ and $ t\in [0,1] $.
\end{definition}
\begin{definition}\cite{IAA2012,Shaikh2018phicomvex}
	A function $ h: U\longrightarrow \mathbb{R} $ is called	
	\begin{enumerate}
		\item 
 geodesic $ E $-convex if $ U $ is geodesic $ E $-convex set and 
	$$h(\gamma_{E(\mu_{1}),E(\mu_{2})})\leq th(E(\mu_{1}))+(1-t)h(E(\mu_{2})), \forall \mu_{1},\ \ \mu_{2}\in U ,\ \ t\in[0,1].$$
	\item geodesic $ \varphi $ -convex if $ U $ is a totally convex set and 
	 $$h(\gamma_{\mu_{1},\mu_{2}})\leq h(\mu_{2})+t\varphi\left(h(\mu_{1}), h(\mu_{2})\right) , \forall \mu_{1},\ \ \mu_{2}\in U ,\ \ t\in[0,1].$$
		\end{enumerate}
\end{definition}
\section{Some Properties of $ \varphi_{E} $-convex Functions}
This part of the work deals with some properties of $ \varphi_{E} $-convex functions. Considering that $ h:B\longrightarrow \mathbb{R} $ is $ \varphi_{E} $-convex function and $ E:\mathbb{R}\longrightarrow \mathbb{R} $, we present the following. For any two points $ E(\mu_{1}), E(\mu_{2})\in B $ with $ E(\mu_{1})< E(\mu_{2}) $ and for each point $ E(\mu)\in (E(\mu_{1}), E(\mu_{2})) $ can be expressed as 
$$E(\mu)=tE(\mu_{1})+(1-t) E(\mu_{1}), \ \ t=\frac{E(\mu_{2})-E(\mu)}{E(\mu_{2})-E(\mu_{1})}.$$
Also, since a function $ h $ is $ \varphi_{E} $-convex function if 
$$h(E(\mu))\leq h(E(\mu_{2}))+\frac{E(\mu_{2})-E(\mu)}{E(\mu_{2})-E(\mu_{1})} \varphi(h(E(\mu_{1})),h(E(\mu_{2}))),$$
then 
\begin{eqnarray}\label{eq4}
\frac{h(E(\mu_{2}))-h(E(\mu))}{E(\mu_{2})-E(\mu)} \geqslant \frac{\varphi(h(E(\mu_{1})),h(E(\mu_{2})))}{E(\mu_{1})-E(\mu_{2})}
\end{eqnarray}
$ \forall E(\mu)\in \left(E(\mu_{2}),E(\mu_{1}) \right)  $.\\
Hence, we can say that a function $ h $ is $ \varphi_{E}$-convex function
if it satisfies the inquality (\ref{eq4}).\\

The next example shows that $ \varphi_{E}$-convex function is not necessarily to be $ \varphi $- convex function.
\begin{example}
	Consider \begin{eqnarray*}
		h(u_{1}) &=& \begin{cases}
			1 ; u_{1}\geqslant 0,\\ 
			-u_{1}^{2} ; u_{1} <0,
		\end{cases}
	\end{eqnarray*}
with $ E(u_{1})=-a $ where $ a\in \mathbb{R}^{+} $ and $ \varphi(u_{1},u_{2})=u_{1}-2u_{2} $. Then 
$ h(tE(u_{1})+(1-t)E(u_{2}))= -a^{2} $
while $ h(E(u_{2}))+t\varphi(h(E(u_{1})),h(E(u_{2})))=(t-1)a^{2} $, which means that $ h $ is a $ \varphi_{E} $-conex function. On the other hand, if we take $ u_{1}>0 $ and $ u_{2}>0 $, then $ h $ is not $ \varphi $-convex function.
\end{example}
\begin{theorem}
	 If $ h:B\subset E(\mathbb{R})\longrightarrow \mathbb{R} $ is  differentiable and $ \varphi_{E}$-convex function in $ B $ and $ h(E(u_{1}))\neq h(E(u_{2})) $, then there are $ E(\alpha), E(\beta)\in (E(u_{2}),E(u_{1}))\subset B $ such that 
	 $$h'\left(E(\alpha) \right)\geqslant\frac{\varphi(h(E(u_{1})),h(E(u_{2})))}{h(E(u_{1}))-h(E(u_{2}))}h'(E(\beta))\geqslant h'(E(\beta)) .$$
\end{theorem}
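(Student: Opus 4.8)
The plan is to read the conclusion as a Cauchy-type mean value estimate and to extract the two points $E(\alpha),E(\beta)$ from two applications of the classical Mean Value Theorem, letting the $\varphi_{E}$-convexity supply the inequalities. Throughout I would abbreviate $\Delta=E(u_{1})-E(u_{2})$, $D=h(E(u_{1}))-h(E(u_{2}))$ (nonzero by hypothesis), and $\Phi=\varphi(h(E(u_{1})),h(E(u_{2})))$, so that the target chain reads $h'(E(\alpha))\ge (\Phi/D)\,h'(E(\beta))\ge h'(E(\beta))$. Since $h$ is differentiable on $B\subset E(\mathbb{R})$, the first step is to apply the Mean Value Theorem to $h$ on the whole chord joining $E(u_{2})$ and $E(u_{1})$; this produces a point $E(\beta)\in(E(u_{2}),E(u_{1}))$ with $h'(E(\beta))=D/\Delta$.

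Next I would produce $E(\alpha)$ from the $\varphi_{E}$-convexity characterization (\ref{eq4}). Fixing any interior point $E(\mu)\in(E(u_{2}),E(u_{1}))$, inequality (\ref{eq4}) bounds the one-sided difference quotient of $h$ between the base point and $E(\mu)$ from below by $\Phi/\Delta$. Applying the Mean Value Theorem to $h$ on that shorter chord realizes this difference quotient as $h'(E(\alpha))$ for some $E(\alpha)$ lying strictly between the two points, hence still inside $(E(u_{2}),E(u_{1}))$. This gives the single clean estimate $h'(E(\alpha))\ge \Phi/\Delta$.

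To put this into the stated form I would eliminate $\Delta$ in favour of $h'(E(\beta))$. Because $D\neq 0$ I may write $\Phi/\Delta=(\Phi/D)(D/\Delta)=(\Phi/D)\,h'(E(\beta))$, using $h'(E(\beta))=D/\Delta$ from the first step. Substituting yields $h'(E(\alpha))\ge (\Phi/D)\,h'(E(\beta))$, which is exactly the left-hand inequality with the scalar factor $\Phi/D$. For the right-hand inequality it then suffices to prove $\bigl((\Phi/D)-1\bigr)h'(E(\beta))\ge 0$, i.e. $(\Phi-D)/\Delta\ge 0$. The key auxiliary fact here is the endpoint case $t=1$ of the definition (\ref{eq3}), which gives $h(E(u_{1}))\le h(E(u_{2}))+\Phi$, that is $\Phi\ge D$; hence the numerator $\Phi-D$ is already nonnegative.

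I expect the sign bookkeeping in this last step to be the main obstacle, since it is genuinely delicate. The direction of (\ref{eq4}) is dictated by the ordering of $E(u_{1})$ and $E(u_{2})$ fixed when (\ref{eq4}) is derived, while the positivity of $(\Phi-D)/\Delta$ requires control of the sign of $\Delta$; these two requirements interact and must be reconciled before the chain closes. I would handle this by fixing the ordering consistently with the derivation of (\ref{eq4}), treating the cases $E(u_{1})>E(u_{2})$ and $E(u_{1})<E(u_{2})$ separately, and in each case combining $\Phi\ge D$ with the matching sign of $\Delta$ (and of $h'(E(\beta))=D/\Delta$) to force $(\Phi-D)/\Delta\ge 0$. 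This gives $(\Phi/D)\,h'(E(\beta))\ge h'(E(\beta))$ and completes the estimate; verifying that the two sign constraints are simultaneously satisfiable is the one point I would check with care rather than treat as routine.
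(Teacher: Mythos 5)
Your proposal follows essentially the same route as the paper's own proof: both use the difference-quotient characterization (\ref{eq4}), apply the Mean Value Theorem twice (once on a sub-chord to produce $E(\alpha)$, once on the full chord to produce $E(\beta)$), combine the two to get $h'(E(\alpha))\geq \frac{\Phi}{D}\,h'(E(\beta))$ (in your notation $\Delta=E(u_1)-E(u_2)$, $D=h(E(u_1))-h(E(u_2))$, $\Phi=\varphi(h(E(u_1)),h(E(u_2)))$), and both invoke $\Phi\geq D$, i.e.\ the $t=1$ case of (\ref{eq3}), to append the right-hand inequality. The only structural difference is cosmetic: the paper factors $\frac{\Phi}{\Delta}=\frac{\Phi}{D}\cdot\frac{D}{\Delta}$ before applying the Mean Value Theorem, while you apply it first and factor afterwards.

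The genuine gap is exactly the step you flagged as delicate: the case analysis you propose cannot close it, because the two sign requirements are incompatible rather than reconcilable. With the base-point convention of (\ref{eq3}), inequality (\ref{eq4}) holds in the stated direction only when $\Delta<0$; if $\Delta>0$ it reverses, destroying your estimate $h'(E(\alpha))\geq \Phi/\Delta$. On the other hand, the right-hand inequality $\bigl(\frac{\Phi}{D}-1\bigr)h'(E(\beta))\geq 0$ is, using $h'(E(\beta))=D/\Delta$, equivalent to $(\Phi-D)/\Delta\geq 0$, which given $\Phi\geq D$ forces $\Delta>0$ whenever $\Phi>D$. So in the case $\Delta<0$ the right-hand inequality fails, in the case $\Delta>0$ the left-hand one fails, and no case satisfies both unless $\Phi=D$ exactly. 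In fact the statement itself is false as written: take $E$ the identity, $h(x)=x$, and $\varphi(a,b)=a-b+c$ with a constant $c>0$; then $h$ is differentiable and $\varphi_E$-convex, $h'\equiv 1$, and the claimed chain becomes $1\geq \frac{D+c}{D}\geq 1$, which fails for every admissible pair $u_1\neq u_2$. You should be aware that the paper's own proof contains the identical flaw --- it passes from its inequality (\ref{eq6}) to the final chain by citing $\Phi\geq D$ without ever checking the sign of $\Delta$ --- so your attempt faithfully reproduces the published argument; but the careful verification you promised at the end, had you carried it out, would have shown that the estimate cannot be completed.
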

\begin{proof}
	Since h is $ \varphi_{E}$-convex function, then 
	\begin{eqnarray}\label{eq5}
	\frac{h(E(u_{2}))-h(E(u))}{E(u_{2})-E(u)} &\geqslant& \frac{\varphi(h(E(u_{1})),h(E(u_{2})))}{E(u_{1})-E(u_{2})}\nonumber\\&& \hspace{-1in}=  \frac{\varphi(h(E(u_{1})),h(E(u_{2})))}{h(E(u_{1}))-h(E(u_{2}))}\times \frac{h(E(u_{1}))-h(E(u_{2}))}{E(u_{1})-E(u_{2})}.
	\end{eqnarray}
	Now applying the mean value theorem, then the inequality (\ref{eq5}) can be written as
	\begin{eqnarray}\label{eq6}
	h'\left(E(\alpha) \right)\geqslant\frac{\varphi(h(E(u_{1})),h(E(u_{2})))}{h(E(u_{1}))-h(E(u_{2}))}h'(E(\beta)),
	\end{eqnarray}
	for some $ E(\alpha)\in (E(u_{1}), E(u))\subset (E(u_{1}), E(u_{2})) $ and $ E(\beta)\in (E(u_{1}), E(u_{2})) $. Since $ \varphi(h(E(u_{1})),h(E(u_{2})))\geqslant h(E(u_{1}))-h(E(u_{2})) $, then the inequality (\ref{eq6}) yields
	\begin{eqnarray}
	h'\left(E(\alpha) \right)\geqslant\frac{\varphi(h(E(u_{1})),h(E(u_{2})))}{h(E(u_{1}))-h(E(u_{2}))}h'(E(\beta))\geqslant h'(E(\beta)).\nonumber
	\end{eqnarray}
\end{proof}
\begin{theorem}
	Assume that $ h:B\longrightarrow \mathbb{R} $ is a differentiable $ \varphi_{E}$-convex function. Then  $\forall E(\mu_{i})\in B,\ \ i=1,2,3 $ such that $ E(\mu_{1})<E(\mu_{2})<E(\mu_{3}) $, the following inequality holds 
	$$h'(E(\mu_{2}))+h'(E(\mu_{3}))\leq \frac{\varphi(h(E(\mu_{1})),h(E(\mu_{2})))+\varphi(h(E(\mu_{2})),h(E(\mu_{3})))}{E(\mu_{1}), E(\mu_{3})}.$$
\end{theorem}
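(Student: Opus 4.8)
The plan is to reduce this three–point statement to two separate applications of the basic slope characterization (\ref{eq4}) of $\varphi_{E}$-convexity, one on each of the consecutive subintervals $\left(E(\mu_{1}),E(\mu_{2})\right)$ and $\left(E(\mu_{2}),E(\mu_{3})\right)$, and then to add the two estimates. First I would apply inequality (\ref{eq4}) to the pair $E(\mu_{1}),E(\mu_{2})$, which gives
$$\frac{h(E(\mu_{2}))-h(E(\mu))}{E(\mu_{2})-E(\mu)}\geqslant \frac{\varphi(h(E(\mu_{1})),h(E(\mu_{2})))}{E(\mu_{1})-E(\mu_{2})},\qquad E(\mu)\in\left(E(\mu_{1}),E(\mu_{2})\right),$$
and then apply it once more to the pair $E(\mu_{2}),E(\mu_{3})$ to obtain the analogous bound with numerator $\varphi(h(E(\mu_{2})),h(E(\mu_{3})))$ and denominator $E(\mu_{2})-E(\mu_{3})$. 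The hypothesis $E(\mu_{1})<E(\mu_{2})<E(\mu_{3})$ guarantees that $E(\mu_{2})$ is interior to each pair, so both instances of (\ref{eq4}) are legitimate.

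Second, exactly as in the proof of the preceding theorem, I would pass from difference quotients to derivatives. Since $h$ is differentiable on $B$, letting the interior point $E(\mu)$ tend to the right endpoint makes the left-hand side of each quotient converge to $h'(E(\mu_{2}))$ and $h'(E(\mu_{3}))$ respectively (equivalently, one invokes the mean value theorem and then identifies the Lagrange point in the limit). This converts the two slope inequalities into the estimates $h'(E(\mu_{2}))\geqslant \frac{\varphi(h(E(\mu_{1})),h(E(\mu_{2})))}{E(\mu_{1})-E(\mu_{2})}$ and $h'(E(\mu_{3}))\geqslant \frac{\varphi(h(E(\mu_{2})),h(E(\mu_{3})))}{E(\mu_{2})-E(\mu_{3})}$, whose sum already produces the numerator $\varphi(h(E(\mu_{1})),h(E(\mu_{2})))+\varphi(h(E(\mu_{2})),h(E(\mu_{3})))$ appearing in the statement.

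The main obstacle I anticipate is the final bookkeeping step, namely collapsing the two distinct denominators $E(\mu_{1})-E(\mu_{2})$ and $E(\mu_{2})-E(\mu_{3})$ onto the single factor built from $E(\mu_{1})$ and $E(\mu_{3})$ recorded on the right-hand side, while keeping the inequality pointing in the asserted direction. Because each denominator is negative under $E(\mu_{1})<E(\mu_{2})<E(\mu_{3})$, the merge is sign-sensitive and must be done by controlling the signs of the two bifunction values. Here I would lean on the same structural fact already used earlier, namely $\varphi(h(E(\mu_{i})),h(E(\mu_{j})))\geqslant h(E(\mu_{i}))-h(E(\mu_{j}))$, and, if one additionally assumes $\varphi$ to be nonnegatively linear in the sense of Definition 2.2, the additivity of $\varphi$ gives a cleaner way to amalgamate the two numerators over a common denominator. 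I would verify the orientation of the resulting inequality very carefully against the sign of that common denominator, since reconciling the direction after the two quotients are combined is precisely where a sign slip is easiest to commit.
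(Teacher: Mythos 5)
Your first two steps reproduce the paper's own argument: the paper writes the $\varphi_{E}$-convexity inequality on each of the subintervals $[E(\mu_{1}),E(\mu_{2})]$ and $[E(\mu_{2}),E(\mu_{3})]$, divides by $t$, and lets $t\to 0^{+}$, arriving at its inequality (\ref{eq9}),
$$h'(E(\mu_{2}))(E(\mu_{1})-E(\mu_{2}))+h'(E(\mu_{3}))(E(\mu_{2})-E(\mu_{3}))\leq \varphi(h(E(\mu_{1})),h(E(\mu_{2})))+\varphi(h(E(\mu_{2})),h(E(\mu_{3}))),$$
which is exactly your two derivative estimates added together while still in product (undivided) form. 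Up to packaging --- the paper takes the limit of the combined difference quotient rather than quoting (\ref{eq4}) twice --- your steps one and two are sound and identical in substance to the paper's.

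The genuine gap is your third step, the merging of the two denominators, which you explicitly leave open, and the tools you name would not close it. Additivity of $\varphi$ combines the \emph{arguments} of the bifunction, not the two distinct increments $E(\mu_{1})-E(\mu_{2})$ and $E(\mu_{2})-E(\mu_{3})$, and the bound $\varphi(a,b)\geq a-b$ points the wrong way for producing an upper bound on $h'(E(\mu_{2}))+h'(E(\mu_{3}))$. The idea you are missing is the paper's inequality (\ref{eq10}): stay in product form and compare coefficients, using $E(\mu_{1})-E(\mu_{3})<E(\mu_{1})-E(\mu_{2})$ and $E(\mu_{1})-E(\mu_{3})<E(\mu_{2})-E(\mu_{3})$ to replace both coefficients by the common one,
$$(E(\mu_{1})-E(\mu_{3}))\left(h'(E(\mu_{2}))+h'(E(\mu_{3}))\right)\leq (E(\mu_{1})-E(\mu_{2}))h'(E(\mu_{2}))+(E(\mu_{2})-E(\mu_{3}))h'(E(\mu_{3})),$$
a step which itself tacitly assumes $h'(E(\mu_{2}))\geq 0$ and $h'(E(\mu_{3}))\geq 0$. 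Note also that your anticipated sign trouble is real and in fact afflicts the paper's own last line: passing from the product form to the quotient in the statement requires dividing by $E(\mu_{1})-E(\mu_{3})<0$, which reverses the inequality; with $E=\mathrm{id}$, $\varphi(a,b)=a-b$, $h(u)=u^{2}$ and the points $-1<0<1$, the left side is $h'(0)+h'(1)=2$ while the right side is $0$, so the conclusion as displayed can only hold with the inequality reversed (or kept in product form). In short, your plan stalls at exactly the step that constitutes the theorem, and the instruments you propose cannot carry it through.
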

\begin{proof}
	Since $ h $ is $ \varphi_{E}$-convex in each interval $ W_{1}=[E(\mu_{1}), E(\mu_{2})] $ and $ W_{2}=[E(\mu_{2}), E(\mu_{3})] $, hence
	\begin{eqnarray}\label{eq7}
	h(tE(\mu_{1})+(1-t) E(\mu_{2}))\leq h(E(\mu_{2}))+t\varphi(h(E(\mu_{1})),h(E(\mu_{2})))
	\end{eqnarray}
	and 
	\begin{eqnarray}\label{eq8}
	h(tE(\mu_{2})+(1-t) E(\mu_{3}))\leq h(E(\mu_{3}))+t\varphi(h(E(\mu_{2})),h(E(\mu_{3}))).
	\end{eqnarray}
	From inequalites(\ref{eq7}) and (\ref{eq8}), we get 
	\begin{eqnarray}&&\hspace{-1.2in}
	\frac{h(tE(\mu_{1})+(1-t) E(\mu_{2}))-h(E(\mu_{2}))+h(tE(\mu_{2})+(1-t) E(\mu_{3}))-h(E(\mu_{3}))}{t}\nonumber\\ \hspace{1in}&&\leq \varphi(h(E(\mu_{1})),h(E(\mu_{2})))+ \varphi(h(E(\mu_{2})),h(E(\mu_{3}))).\nonumber
	\end{eqnarray}
	Now, setting $ t\longrightarrow 0 $, we get
	\begin{eqnarray}\hspace{-1in}&&\label{eq9}
	h'(E(\mu_{2}))(E(\mu_{1})-E(\mu_{2}))+	h'(E(\mu_{3}))(E(\mu_{2})-E(\mu_{3})) \nonumber\\&&\leq \varphi(h(E(\mu_{1})),h(E(\mu_{2})))+ \varphi(h(E(\mu_{2})),h(E(\mu_{3}))).
	\end{eqnarray}
	Also, $ E(\mu_{3})> E(\mu_{2}) $ and $ E(\mu_{2})> E(\mu_{1}) $, which means that 
	$E(\mu_{1})-E(\mu_{3})<E(\mu_{1}) -E(\mu_{2}) $ and 	$E(\mu_{1})-E(\mu_{3})<E(\mu_{2}) -E(\mu_{3}) $, then 
	\begin{eqnarray}\hspace{.08in}&&\label{eq10}
	(E(\mu_{1})-E(\mu_{3}))(h'(E(\mu_{2}))+h'(E(\mu_{3})))\nonumber\\\hspace{-2in}&&\leq (E(\mu_{1})-E(\mu_{2}))h'(E(\mu_{2}))+(E(\mu_{2})-E(\mu_{3}))h'(E(\mu_{3})).
	\end{eqnarray}
Hence, from inequalities (\ref{eq9}) and (\ref{eq10}), we get the required result.	
\end{proof}
\section{Properties of Geodesic $ \varphi_{E} $-convex Functions}
In this section, we assume that  $ N $ as  a $ C^{\infty} $ complete $ n $-dimensional Riemannian manifold with Riemannian connection $ \bigtriangledown $. Let $ \mu_{1},\mu_{2}\in N $ and $ \gamma\colon[0,1]\longrightarrow N $ is a geodesic joining $ \mu_{1},\mu_{2} $, i.e., $ \gamma_{\mu_{1},\mu_{2}}(0)=\mu_{2} $ and $ \gamma_{\mu_{1},\mu_{2}}(1)=\mu_{1} $, and $ E $ is a mapping such that $ E:N\longrightarrow N $. Also, in this section, we give a definition of geodesic $ \varphi_{E} $- convex function in a Riemannian manifold $ N $ and study some of its properties.
\begin{definition}
		A function $ h: B\longrightarrow \mathbb{R} $ is 	
		 geodesic $ \varphi_{E} $ -convex if $ B $ is also geodesic E-convex set and 
			$$h(\gamma_{E(\mu_{1}),E(\mu_{2})})\leq h(E(\mu_{2}))+t\varphi\left(h(E(\mu_{1})), h(E(\mu_{2}))\right) , \forall \mu_{1},\ \ \mu_{2}\in B ,\ \ t\in[0,1].$$
			If the above inequality is strictly hold for all $ \mu_{1}, \mu_{2}\in B,  E(\mu_{1}) \neq  E(\mu_{2}), t\in[0,1] $, then $ h $ is called a strictly  geodesic $ \varphi_{E} $ -convex function.
	\end{definition}
\begin{remark}
If $ E $ is indenty map in the above defintion, then we have geodesic $ \varphi $-convex function. Morover, if $$ \varphi\left(h(E(\mu_{1})), h(E(\mu_{2}))\right)=\left(h(E(\mu_{1}))-h(E(\mu_{2}))\right), $$ then we have geodesic E-convex function. 
\end{remark}
\begin{theorem}
	Considering that $ B\subset N $ is an $ E $-convex set, we then write a function $ h:B\longrightarrow \mathbb{R} $ which is a geodesic $ \varphi_{E} $-convex if and only if the function $ K:h(\gamma_{E(u_{1}),E(u_{2})})$ is $ \varphi_{E} $-convex on $ I=[0,1] $.
\end{theorem}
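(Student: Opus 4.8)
The plan is to translate both conditions into inequalities for the single-variable function $K\colon I\to\mathbb{R}$, $K(t)=h(\gamma_{E(u_1),E(u_2)}(t))$, after first recording that, since $B$ is geodesic $E$-convex, the entire arc $\gamma_{E(u_1),E(u_2)}$ stays in $B$, so $K$ is well defined on $I=[0,1]$ for each fixed pair $u_1,u_2\in B$. On the parameter interval there is no ambient map playing the role of $E$, so the $\varphi_E$-convexity of $K$ is read with the identity map on $I$; explicitly, it is the requirement $K(ts_1+(1-t)s_2)\le K(s_2)+t\varphi(K(s_1),K(s_2))$ for all $s_1,s_2,t\in I$. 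The boundary values I will use repeatedly are $K(0)=h(E(u_2))$ and $K(1)=h(E(u_1))$.

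For the \emph{if} direction I would assume $K$ is $\varphi_E$-convex on $I$ and simply specialise its defining inequality to the endpoints of the parameter interval. Taking $s_1=1$ and $s_2=0$ gives $K(t\cdot 1+(1-t)\cdot 0)\le K(0)+t\varphi(K(1),K(0))$, that is, $h(\gamma_{E(u_1),E(u_2)}(t))\le h(E(u_2))+t\varphi(h(E(u_1)),h(E(u_2)))$, which is exactly the geodesic $\varphi_E$-convexity inequality for $h$. Since $u_1,u_2$ were arbitrary in the geodesic $E$-convex set $B$, this shows $h$ is geodesic $\varphi_E$-convex.

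For the converse I would fix $u_1,u_2\in B$ together with parameters $s_1,s_2\in I$, set $\gamma=\gamma_{E(u_1),E(u_2)}$, and exploit the defining property of geodesics that any sub-arc is again a geodesic under affine reparametrisation: the map $\lambda\mapsto\gamma(\lambda s_1+(1-\lambda)s_2)$ is precisely the geodesic joining $\gamma(s_2)$ (at $\lambda=0$) to $\gamma(s_1)$ (at $\lambda=1$), i.e. $\gamma_{\gamma(s_1),\gamma(s_2)}$. Applying the geodesic $\varphi_E$-convexity of $h$ along this sub-geodesic would yield $K(\lambda s_1+(1-\lambda)s_2)\le K(s_2)+\lambda\varphi(K(s_1),K(s_2))$, which is the $\varphi_E$-convexity of $K$ on $I$.

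The main obstacle is exactly this last step: to invoke geodesic $\varphi_E$-convexity on the sub-arc I need its endpoints $\gamma(s_1),\gamma(s_2)$ to be of the form $E(v_1),E(v_2)$ for some $v_1,v_2\in B$, which the hypotheses do not guarantee, since $E(B)$ need not contain the interior points of $\gamma_{E(u_1),E(u_2)}$. I would therefore either (i) restrict the converse to the parameter endpoints $s_1=1,s_2=0$, where the identities $\gamma(1)=E(u_1)$ and $\gamma(0)=E(u_2)$ make the argument immediate, or (ii) add the standing assumption—implicit in the geodesic $E$-convexity setting—that $E(B)$ is itself geodesic convex (for instance when $E$ is idempotent), so that every interior point of $\gamma_{E(u_1),E(u_2)}$ again lies in $E(B)$ and can be written as $E(v_i)$, allowing the sub-arc inequality to be applied verbatim.
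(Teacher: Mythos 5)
Your proposal follows essentially the same route as the paper's own proof. For the ``if'' direction the paper does exactly what you do: it specializes the $\varphi_{E}$-convexity inequality of $K$ to the parameter endpoints (written there, with some abuse of notation, as ``let $E(\mu_{1})=1$, $E(\mu_{2})=0$''), obtaining $K(t)\leq K(0)+t\varphi(K(1),K(0))$, which is the geodesic inequality for $h$. For the converse the paper likewise restricts $\gamma_{E(\mu_{1}),E(\mu_{2})}$ to a subinterval $[\eta_{1},\eta_{2}]$ and introduces the affine reparametrization $\alpha(t)$, but its proof stops at that point and never actually invokes the geodesic $\varphi_{E}$-convexity of $h$ along the sub-arc. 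Consequently the obstacle you isolate is genuine and is left unaddressed in the paper: to apply the definition of geodesic $\varphi_{E}$-convexity to the sub-geodesic one needs its endpoints $\gamma(s_{1}),\gamma(s_{2})$ to be of the form $E(v_{1}),E(v_{2})$ with $v_{1},v_{2}\in B$, which the hypotheses do not supply. Your two remedies (restricting to the endpoint parameters, or assuming $E(B)$ itself geodesic convex, e.g.\ $E$ idempotent) are reasonable ways to close a gap that the published argument simply ignores; in this respect your write-up is more careful than the original.
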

\begin{proof}
	Let $ K $ be $ \varphi_{E} $-convex on $ I $, then 
		\begin{eqnarray}
	K(tE(\mu_{1})+(1-t) E(\mu_{2}))\leq K(E(\mu_{2}))+t\varphi(K(E(\mu_{1})),K(E(\mu_{2})))
	\end{eqnarray}
holds.\\
Also, let $ E(\mu_{1})=1, E(\mu_{2})=0, $ then 
$ K(t)\leq K(0)+t\varphi(K(1),K(0)). $
Hence 
$$h(\gamma_{E(\mu_{1}),E(\mu_{2})})\leq K(E(\mu_{2}))+t\varphi\left(K(E(\mu_{1})), K(E(\mu_{2}))\right).$$
Conversely, assume that $ h $ is geodesic $ \varphi_{E} $-convex function. By restricting the domain of $ \gamma_{E(\mu_{1}),E(\mu_{2})} $ to $ [\eta_{1},\eta_{2}] $, and hence the paramatrize form of this restriction can be rewritten as 
$$ \alpha(t)=\gamma_{E(\mu_{1}),E(\mu_{2})}(tE(\mu_{1})+(1-t)E(\mu_{2})) $$	
$$\alpha(0)=\gamma_{E(\mu_{1}),E(\mu_{2})}(E(\mu_{2})).$$
\end{proof}
\begin{proposition}
	\begin{enumerate}
		\item If $ h:B\longrightarrow \mathbb{R} $ is geodesic $ \varphi_{E} $-convex function where $ \varphi $ is non-negative linear, then $ xh: B\longrightarrow \mathbb{R},\ \ \forall x\geqslant 0 $ is also geodesic $ \varphi_{E} $-convex.
		\item Let $ h_{i}:  B\longrightarrow \mathbb{R},\ \ i=1,2  $ be two geodesic $ \varphi_{E} $-convex functions where $ \varphi $ is additive, then $ h_{1}+h_{2} $ is also geodesic $ \varphi_{E} $-convex function.
	\end{enumerate}
\end{proposition}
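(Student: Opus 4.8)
The plan is to verify the defining inequality of geodesic $\varphi_E$-convexity directly for each new function, using the algebraic hypotheses on $\varphi$ to match the scalar multiplication (respectively the sum) with the arguments of $\varphi$. In both parts the underlying set $B$ is left unchanged, so the requirement that $B$ be a geodesic $E$-convex set is inherited immediately from the hypothesis on $h$ (respectively on $h_1,h_2$); only the functional inequality needs to be checked.

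For part (1), I would fix $\mu_1,\mu_2\in B$ and $t\in[0,1]$ and start from the geodesic $\varphi_E$-convexity of $h$, namely
$$h(\gamma_{E(\mu_1),E(\mu_2)}(t))\leq h(E(\mu_2))+t\varphi(h(E(\mu_1)),h(E(\mu_2))).$$
Since $x\geq 0$, multiplying through preserves the inequality; the left side becomes $(xh)(\gamma_{E(\mu_1),E(\mu_2)}(t))$ and the constant term becomes $(xh)(E(\mu_2))$. The only place the hypothesis enters is the term $xt\varphi(h(E(\mu_1)),h(E(\mu_2)))$: invoking nonnegative homogeneity with the scalar $x\geq 0$ rewrites $x\varphi(h(E(\mu_1)),h(E(\mu_2)))=\varphi(xh(E(\mu_1)),xh(E(\mu_2)))=\varphi((xh)(E(\mu_1)),(xh)(E(\mu_2)))$, which is exactly the $\varphi$-term demanded in the definition for $xh$. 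Hence $xh$ satisfies the required inequality.

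For part (2), I would add the two defining inequalities for $h_1$ and $h_2$ at the same $\mu_1,\mu_2,t$. The geodesic and constant terms add directly to $(h_1+h_2)(\gamma_{E(\mu_1),E(\mu_2)}(t))$ and $(h_1+h_2)(E(\mu_2))$, while the two $\varphi$-contributions combine through additivity:
$$\varphi(h_1(E(\mu_1)),h_1(E(\mu_2)))+\varphi(h_2(E(\mu_1)),h_2(E(\mu_2)))=\varphi((h_1+h_2)(E(\mu_1)),(h_1+h_2)(E(\mu_2))).$$
This yields precisely the geodesic $\varphi_E$-convexity inequality for $h_1+h_2$.

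I do not expect a genuine obstacle here; both parts are direct consequences of the order being preserved under multiplication by a nonnegative scalar (respectively under addition), together with the matching algebraic property of $\varphi$. The only care required is bookkeeping: confirming that $(xh)(E(\mu_i))=x\,h(E(\mu_i))$ and $(h_1+h_2)(E(\mu_i))=h_1(E(\mu_i))+h_2(E(\mu_i))$ so that the rewritten $\varphi$-arguments are exactly the function values prescribed by the definition, and noting that part (1) in fact uses only nonnegative homogeneity while part (2) uses only additivity, so the full nonnegative-linearity hypothesis is more than strictly needed for each single part.
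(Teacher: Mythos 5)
Your proof is correct. The paper states this proposition without proof, but your argument is precisely the one the paper uses for its subsequent, more general theorem on $h=\sum_{i=1}^{n}x_{i}h_{i}$ with $x_{i}\geqslant 0$ (of which part (1) is the case $n=1$ and part (2) is the case $n=2$, $x_{1}=x_{2}=1$): multiply the defining inequality by the nonnegative scalar and pull it inside $\varphi$ via nonnegative homogeneity, then add the inequalities and merge the $\varphi$-terms via additivity; your remark that part (2) needs only additivity, not full nonnegative linearity, is also accurate.
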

\begin{theorem}
	Suppose that $ B\subset N $ is a  geodesic $ E $-convex  set, $ h_{1}:B\longrightarrow\mathbb{R} $ is a geodesic $ E $-convex function and $ h_{2}: U\longrightarrow \mathbb{R} $ is a non-decreasing $ \varphi_{E} $-convex function such that $ Rang(h_{1})\subseteq U $. Then $ h_{2}oh_{1} $ is also a geodesic $ \varphi_{E} $-convex.
\end{theorem}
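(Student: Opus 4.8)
The plan is to chain together the three hypotheses—the geodesic $E$-convexity of $h_1$, the monotonicity of $h_2$, and the $\varphi_E$-convexity of $h_2$—so that the defining inequality for geodesic $\varphi_E$-convexity of $h_2\circ h_1$ emerges directly. First I would record that, since $B$ is geodesic $E$-convex and $\mathrm{Rang}(h_1)\subseteq U$, the composite $h_2\circ h_1$ is well defined on the geodesic $E$-convex set $B$; hence only the inequality in the definition needs to be verified.

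Fix $\mu_1,\mu_2\in B$ and $t\in[0,1]$, and abbreviate $\gamma(t)=\gamma_{E(\mu_1),E(\mu_2)}(t)$. The first step applies the geodesic $E$-convexity of $h_1$ to obtain
$$h_1(\gamma(t)) \le t\,h_1(E(\mu_1)) + (1-t)\,h_1(E(\mu_2)).$$
Because $h_2$ is non-decreasing, applying it to both sides preserves the inequality:
$$h_2\bigl(h_1(\gamma(t))\bigr) \le h_2\bigl(t\,h_1(E(\mu_1)) + (1-t)\,h_1(E(\mu_2))\bigr).$$

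The second step invokes the $\varphi_E$-convexity of $h_2$ on the right-hand argument. Treating the real numbers $h_1(E(\mu_1))$ and $h_1(E(\mu_2))$ as the relevant $E$-images in the domain $U$, the defining inequality (\ref{eq3}) gives
$$h_2\bigl(t\,h_1(E(\mu_1)) + (1-t)\,h_1(E(\mu_2))\bigr) \le h_2\bigl(h_1(E(\mu_2))\bigr) + t\,\varphi\bigl(h_2(h_1(E(\mu_1))),\,h_2(h_1(E(\mu_2)))\bigr).$$
Concatenating the two displays and rewriting everything in terms of the composite yields exactly
$$(h_2\circ h_1)(\gamma(t)) \le (h_2\circ h_1)(E(\mu_2)) + t\,\varphi\bigl((h_2\circ h_1)(E(\mu_1)),\,(h_2\circ h_1)(E(\mu_2))\bigr),$$
which is precisely the geodesic $\varphi_E$-convexity of $h_2\circ h_1$.

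I expect the main obstacle to lie in the second step: to invoke (\ref{eq3}) legitimately, the convex combination $t\,h_1(E(\mu_1))+(1-t)\,h_1(E(\mu_2))$ must lie in the domain of $h_2$ and be realized through the $E$-structure that $\varphi_E$-convexity presupposes. I would close this gap using the $E$-convexity of $U$ together with $\mathrm{Rang}(h_1)\subseteq U$ to guarantee membership, and by identifying each $h_1(E(\mu_i))$ with the $E$-image entering the definition. A secondary observation is that the argument uses only that $h_2$ is monotone and $\varphi_E$-convex and that $h_1$ is geodesic $E$-convex; no further structural property of $\varphi$ beyond what is already built into (\ref{eq3}) is required.
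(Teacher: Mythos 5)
Your argument is correct and it is genuinely different from --- and cleaner than --- the proof in the paper. You use the hypotheses exactly as stated: geodesic $E$-convexity of $h_1$ gives the linear bound $h_1(\gamma(t))\le t\,h_1(E(\mu_1))+(1-t)\,h_1(E(\mu_2))$, monotonicity of $h_2$ transfers it, and then the $\varphi_E$-convexity of $h_2$ is applied to an honest convex combination of the two points $h_1(E(\mu_1)),h_1(E(\mu_2))\in U$, which is precisely the situation that the definition (\ref{eq3}) covers. The paper's chain is different: its first inequality bounds $h_1(\gamma(t))$ by $h_1(E(\mu_2))+t\varphi\left(h_1(E(\mu_1)),h_1(E(\mu_2))\right)$, which is the geodesic $\varphi_E$-convexity inequality for $h_1$, not the hypothesized geodesic $E$-convexity (it follows from it only if one also grants $\varphi(a,b)\ge a-b$, an assumption the paper invokes tacitly elsewhere); its second inequality then moves $h_2$ inside this $\varphi$-combination, asserting $h_2\left(b+t\varphi(a,b)\right)\le h_2(b)+t\varphi\left(h_2(a),h_2(b)\right)$, a step that the definition of $\varphi_E$-convexity does not license, since (\ref{eq3}) controls $h_2$ only at points of the form $ta+(1-t)b$. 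Your route avoids both of these defects, and it also needs no structural hypothesis on $\varphi$, so it is the argument that actually proves the stated theorem.

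The one imprecision your proof shares with the paper's is the one you flagged yourself: to invoke (\ref{eq3}) for $h_2$ at the points $h_1(E(\mu_1))$ and $h_1(E(\mu_2))$, these points must be $E$-images under the map $E$ on $\mathbb{R}$ attached to the $\varphi_E$-convexity of $h_2$ (a different map from $E:N\longrightarrow N$). The inclusion $\mathrm{Rang}(h_1)\subseteq U$ gives membership in $U$, not in $E(U)$, so strictly speaking the identification requires this real-line $E$ to be the identity (i.e.\ $h_2$ is $\varphi$-convex) or $\mathrm{Rang}(h_1)\subseteq E(U)$. The paper makes the same identification silently in its own computation; once it is made explicit as an assumption or a reading of the statement, your proof is complete.
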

\begin{proof} The above theorem can be proved in the following way
	\begin{eqnarray}
	h_{2}oh_{1}(\gamma_{E(\mu_{1}),E(\mu_{2})})&=&	h_{2}\left( h_{1}(\gamma_{E(\mu_{1}),E(\mu_{2})})\right) \nonumber\\ &\leq& h_{2}\left(h_{1} (E(\mu_{2}))+t\varphi\left(h_{1}(E(\mu_{1})), h_{1}(E(\mu_{2}))\right)\right) \nonumber\\ &\leq& h_{2}\left(h_{1} (E(\mu_{2}))\right) +t\varphi\left(h_{2}\left( h_{1}(E(\mu_{1}))\right) , h_{2}\left(h_{1}(E(\mu_{2}))\right)\right)\nonumber\\ &=& h_{2}oh_{1}(E(\mu_{2}))+t\varphi\left(  h_{2}o h_{1}(E(\mu_{1})), h_{2}o h_{1}(E(\mu_{2})) \right). \nonumber   
	\end{eqnarray}
	Thus, $ h_{2}oh_{1} $ is geodesic $\varphi_{E} $-convex function.
\end{proof}
\begin{theorem}
Suppose that $ h_{i}:B\subset N\longrightarrow \mathbb{R}, \ \ i=1,2,\cdots, n$ are geodesic $ \varphi_{E} $-convex functions and $ \varphi $ is non-negatively linear. Then the function $ h=\sum_{i=1}^{n} x_{i}h_{i} $ is also a geodesic $ \varphi_{E} $-convex, $ \forall x_{i}\in \mathbb{R} $ and $ x_{i}\geqslant 0 $.
\end{theorem}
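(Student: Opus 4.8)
The plan is to argue directly from the definition of geodesic $\varphi_E$-convexity, exploiting the \emph{non-negative linearity} of $\varphi$ — that is, both its non-negative homogeneity and its additivity — together with the hypothesis $x_i \geq 0$. An alternative would be a short induction on $n$ built on the two parts of the preceding Proposition (the scalar-multiple statement and the sum statement), but since all the structure required is already captured by the two linearity axioms, I prefer to handle the general finite combination in a single pass.

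First I would note that the common domain $B$ is geodesic $E$-convex by assumption, so for any $\mu_1,\mu_2 \in B$ the geodesic $\gamma_{E(\mu_1),E(\mu_2)}$ lies in $B$ and the candidate $h = \sum_{i=1}^n x_i h_i$ is well defined along it. Evaluating $h$ on this geodesic and passing the finite sum through gives
$$h(\gamma_{E(\mu_1),E(\mu_2)}) = \sum_{i=1}^n x_i \, h_i(\gamma_{E(\mu_1),E(\mu_2)}).$$
Applying the geodesic $\varphi_E$-convexity of each $h_i$, and using $x_i \geq 0$ so that scaling preserves the direction of every one of the $n$ inequalities, I obtain
$$h(\gamma_{E(\mu_1),E(\mu_2)}) \leq \sum_{i=1}^n x_i \, h_i(E(\mu_2)) + t \sum_{i=1}^n x_i \, \varphi\big(h_i(E(\mu_1)), h_i(E(\mu_2))\big).$$

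The crux is then to recombine the $\varphi$-terms into a single bifunction evaluation. Using non-negative homogeneity — valid precisely because $x_i \geq 0$ — I move each scalar inside, so that $x_i \varphi(h_i(E(\mu_1)), h_i(E(\mu_2))) = \varphi(x_i h_i(E(\mu_1)), x_i h_i(E(\mu_2)))$, and then additivity collapses the sum:
$$\sum_{i=1}^n \varphi\big(x_i h_i(E(\mu_1)), x_i h_i(E(\mu_2))\big) = \varphi\Big(\sum_{i=1}^n x_i h_i(E(\mu_1)), \sum_{i=1}^n x_i h_i(E(\mu_2))\Big) = \varphi\big(h(E(\mu_1)), h(E(\mu_2))\big).$$
Since $\sum_{i=1}^n x_i h_i(E(\mu_2)) = h(E(\mu_2))$ as well, the chain yields $h(\gamma_{E(\mu_1),E(\mu_2)}) \leq h(E(\mu_2)) + t\varphi(h(E(\mu_1)), h(E(\mu_2)))$, which is exactly the defining inequality, completing the argument.

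I do not anticipate a genuine obstacle here; the only point requiring care is the dual role of the sign condition $x_i \geq 0$, which is invoked twice — once to preserve the direction of each scaled inequality, and again to legitimately apply non-negative homogeneity, which the definition asserts only for non-negative multipliers. Both the additive and the homogeneous halves of ``non-negatively linear'' are essential to the recombination step, so I would flag explicitly that relaxing either hypothesis breaks the passage from the sum of $\varphi$-terms to a single $\varphi$ evaluated at the combined arguments.
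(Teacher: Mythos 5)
Your proposal is correct and follows essentially the same route as the paper's proof: apply the geodesic $\varphi_E$-convexity of each $h_i$, scale by $x_i \geq 0$, sum over $i$, and recombine the $\varphi$-terms via non-negative homogeneity and additivity. The only difference is presentational — you make explicit the two uses of the sign condition and the two linearity axioms, which the paper invokes silently.
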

\begin{proof}
	Considering $ \mu_{1},\mu_{2}\in B $ and since $ h_{i}, i=1,2,\cdots ,n $ are geodesic $ \varphi_{E} $-convex functions, then
	\begin{eqnarray}
	h_{i}(\gamma_{E(\mu_{1}),E(\mu_{2})})\leq h_{i} (E(\mu_{2}))+t\varphi\left(h_{i}(E(\mu_{1})), h_{i}(E(\mu_{2}))\right)\nonumber.
	\end{eqnarray} 
	Also, 
	\begin{eqnarray}
x_{i}h_{i}(\gamma_{E(\mu_{1}),E(\mu_{2})})\leq x_{i}h_{i} (E(\mu_{2}))+t\varphi\left(x_{i}h_{i}(E(\mu_{1})), x_{i}h_{i}(E(\mu_{2}))\right)\nonumber.
\end{eqnarray} 	
Hence,
 \begin{eqnarray}
 \sum_{i=1}^{n}x_{i}h_{i}(\gamma_{E(\mu_{1}),E(\mu_{2})})\leq \sum_{i=1}^{n}\left[  x_{i}h_{i} (E(\mu_{2}))+t\varphi\left(x_{i}h_{i}(E(\mu_{1})), x_{i}h_{i}(E(\mu_{2}))\right)\right] \nonumber,
 \end{eqnarray}
which means that 
\begin{eqnarray}
h(\gamma_{E(\mu_{1}),E(\mu_{2})})\leq h (E(\mu_{2}))+t\varphi\left(h(E(\mu_{1})), h(E(\mu_{2}))\right) \nonumber.
\end{eqnarray}
\end{proof}
 Now we consider that $ N_{1} $ and $ N_{2} $ are two complete Riemannian manifolds, and $ \bigtriangledown $ is the Levi-Civita connection on $ N_{1} $. If $ H: N_{1}\longrightarrow N_{2} $ is a diffeomorphism, then $ H\ast\bigtriangledown =\bigtriangledown^{\ast} $ is an affine connection of $ N_{2} $. Moreover, let  $ \gamma  $ be a geodesic in $ (N_{1},\bigtriangledown) $ , then $ Ho\gamma $ is also a geodesic in $ (N_{2},\bigtriangledown^{\ast}) $ see \cite{Udrist1994}.
 \begin{theorem}\label{th1}
 Suppose that $ h:B\longrightarrow\mathbb{R} $ is a geodesic $ \varphi_{E} $-convex function and  $ H: N_{1}\longrightarrow N_{2} $, then a sufficent condition for $ ho H^{-1}: H(B)\longrightarrow \mathbb{R} $ to be a geodesic $ \varphi_{E} $-convex function is $ H $ must be a differmorphism. 
 \end{theorem}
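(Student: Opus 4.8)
The plan is to transport the entire $\varphi_E$-structure from $N_1$ to $N_2$ through the diffeomorphism $H$. Concretely, I would first fix the data with respect to which $h\circ H^{-1}$ is to be geodesic $\varphi_E$-convex: since $E\colon N_1\longrightarrow N_1$ does not act on $N_2$ directly, I would work with the conjugated map $E^{\ast}=H\circ E\circ H^{-1}\colon N_2\longrightarrow N_2$ and with the push-forward connection $\bigtriangledown^{\ast}=H\ast\bigtriangledown$ introduced just before the statement. With this reading, proving the theorem splits into two checks: first that $H(B)$ is a geodesic $E^{\ast}$-convex subset of $(N_2,\bigtriangledown^{\ast})$, and second that $h\circ H^{-1}$ satisfies the defining inequality of geodesic $\varphi_{E^{\ast}}$-convexity on $H(B)$.

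For the first check, I would start from arbitrary $\nu_1,\nu_2\in H(B)$, write $\nu_i=H(\mu_i)$ with $\mu_i\in B$, and use that $B$ is geodesic $E$-convex to obtain the unique geodesic $\gamma_{E(\mu_1),E(\mu_2)}$ of $(N_1,\bigtriangledown)$ lying in $B$. The cited fact that $H$ carries $\bigtriangledown$-geodesics to $\bigtriangledown^{\ast}$-geodesics then shows $H\circ\gamma_{E(\mu_1),E(\mu_2)}$ is a geodesic of $(N_2,\bigtriangledown^{\ast})$; since the paper's convention gives $\gamma_{E(\mu_1),E(\mu_2)}(0)=E(\mu_2)$ and $\gamma_{E(\mu_1),E(\mu_2)}(1)=E(\mu_1)$, its image joins $HE(\mu_2)=E^{\ast}(\nu_2)$ to $HE(\mu_1)=E^{\ast}(\nu_1)$, so it is exactly $\gamma_{E^{\ast}(\nu_1),E^{\ast}(\nu_2)}$. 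Because $\gamma_{E(\mu_1),E(\mu_2)}\subseteq B$ we get $H\circ\gamma_{E(\mu_1),E(\mu_2)}\subseteq H(B)$, and the bijectivity of $H$ transports the uniqueness of the geodesic; hence $H(B)$ is geodesic $E^{\ast}$-convex.

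For the second check, the computation is a direct substitution. For $t\in[0,1]$ I would evaluate $h\circ H^{-1}$ along the transported geodesic and use $H^{-1}\circ H=\mathrm{id}$ to get $(h\circ H^{-1})\bigl(\gamma_{E^{\ast}(\nu_1),E^{\ast}(\nu_2)}(t)\bigr)=h\bigl(\gamma_{E(\mu_1),E(\mu_2)}(t)\bigr)$. Applying the geodesic $\varphi_E$-convexity of $h$ bounds this by $h(E(\mu_2))+t\varphi\bigl(h(E(\mu_1)),h(E(\mu_2))\bigr)$, and the identities $h(E(\mu_i))=(h\circ H^{-1})(E^{\ast}(\nu_i))$ rewrite the bound entirely in terms of $h\circ H^{-1}$ and $E^{\ast}$, which is precisely the geodesic $\varphi_{E^{\ast}}$-convexity inequality for $h\circ H^{-1}$ on $H(B)$.

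I expect the only genuine obstacle to be the first step, namely producing the geodesics of $N_2$ and verifying the $E^{\ast}$-convexity of the image set; this is exactly where the diffeomorphism hypothesis enters, through the stated property that $H$ pushes $\bigtriangledown$-geodesics to $\bigtriangledown^{\ast}$-geodesics together with the fact that $H^{-1}$ is well defined and smooth. The second step is then purely formal. It is worth flagging that the theorem is genuinely a sufficiency statement: the conclusion must be read with respect to the conjugated data $E^{\ast}$ and $\bigtriangledown^{\ast}$, and it is the full diffeomorphism property, rather than smoothness or bijectivity alone, that guarantees both halves simultaneously.
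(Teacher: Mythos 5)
Your proposal is correct, and its computational core is the same as the paper's proof: both arguments push the geodesic $\gamma_{E(\mu_{1}),E(\mu_{2})}$ forward through $H$, invoke the cited fact that $H$ carries $\bigtriangledown$-geodesics to $\bigtriangledown^{\ast}$-geodesics, and then derive the convexity inequality by the substitution $(h\circ H^{-1})\circ H=h$. Where you genuinely improve on the paper is in the set-up, and the improvement matters. First, you introduce the conjugated map $E^{\ast}=H\circ E\circ H^{-1}$ on $N_{2}$; the paper never says what the $E$-structure on $N_{2}$ is, and as a result its final display applies $h\circ H^{-1}$ to the points $E(\mu_{1}),E(\mu_{2})\in N_{1}$, even though $h\circ H^{-1}$ is only defined on $H(B)\subseteq N_{2}$ --- a type mismatch that your conjugation repairs, and without which the theorem's conclusion is not even well posed. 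Second, you explicitly verify that $H(B)$ is a geodesic $E^{\ast}$-convex set, including the transport of uniqueness of the connecting geodesic via bijectivity of $H$; this is required by the definition of a geodesic $\varphi_{E}$-convex function, whereas the paper only asserts in passing that ``$H(B)$ is totally geodesic'' and never ties this to the $E$-convexity requirement. In short, the paper's proof is a terser version of your second check, while your first check and the introduction of $E^{\ast}$ fill genuine gaps that the paper leaves implicit.
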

\begin{proof}
	Assume that $ \gamma_{E(\mu_{1}),E(\mu_{2})} $ is a geodesic joining $ E(\mu_{1}) $ and $ E(\mu_{2}) $, where $ \mu_{1}, \mu_{2}\in B $. Since $ H $ is a diffeomorphism, then $ H(B) $ is totally geodesic, and $ Ho\gamma_{E(\mu_{1}),E(\mu_{2})} $ is geodesic joining $ H(E(\mu_{1})) $ and $ H(E(\mu_{2})) $. Then 
	\begin{eqnarray}&&\hspace{-0.5in}
	(hoH^{-1})\left(Ho\gamma_{E(\mu_{1}),E(\mu_{2})} (t)\right)\nonumber\\ \hspace{0.01in}&&= h(\gamma_{E(\mu_{1}),E(\mu_{2})} (t))\nonumber\\ \hspace{0.01in}&&\leq h (E(\mu_{2}))+t\varphi\left(h(E(\mu_{1})), h(E(\mu_{2}))\right)\nonumber\\ \hspace{0.01in}&& = (ho H^{-})(H (E(\mu_{2})))+t\varphi\left((ho H^{-})(E(\mu_{1})), (ho H^{-})(E(\mu_{2}))\right)\nonumber.   
	\end{eqnarray}	
\end{proof}
\begin{theorem}\label{th2}
	Assume that $ h:B\longrightarrow\mathbb{R} $ is a geodesic $ \varphi_{E} $-convex function, and $ \varphi $ bounded from above on $ h(B)\times h(B) $	 with an upper bound $ K $. Then $ h $ is continuous on $ Int(B) $.
\end{theorem}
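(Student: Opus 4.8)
The plan is to show that $h$ is in fact \emph{locally Lipschitz} on $\operatorname{Int}(B)$, which immediately gives continuity. The point of the single hypothesis $\varphi\le K$ on $h(B)\times h(B)$ is that it turns the one‑sided convexity estimate into a genuine two‑sided bound on difference quotients, so I would aim directly at a Lipschitz inequality rather than first grinding out local boundedness. Note in particular that the defining inequality of geodesic $\varphi_E$‑convexity, $h(\gamma_{E(\mu_1),E(\mu_2)}(t))\le h(E(\mu_2))+t\varphi(h(E(\mu_1)),h(E(\mu_2)))$, already controls how much $h$ can \emph{increase} along a geodesic, and the upper bound $K$ is exactly what is needed to make that control uniform.

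First I would localize. Fix an interior point $p_0$, and using completeness of $N$ together with $p_0\in\operatorname{Int}(B)$, choose $r>0$ so small that the geodesic ball $B(p_0,4r)$ is geodesically convex and contained in $\operatorname{Int}(B)$. All auxiliary points below will be taken of the form $E(\mu)$, so that geodesic $E$‑convexity of $B$ guarantees the relevant geodesics lie in $B$ and the defining inequality applies to them.

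The key step is the difference‑quotient estimate. Given distinct $x=E(\mu_2)$ and $y$ in $B(p_0,r)$, I would run the minimizing geodesic from $x$ through $y$ and prolong it to a point $z=E(\mu_1)$ with $d(x,z)\ge 2r$, which stays inside $B(p_0,4r)\subset B$. Then $y=\gamma_{E(\mu_1),E(\mu_2)}(\lambda)$ with $\lambda=d(x,y)/d(x,z)\le d(x,y)/(2r)$, and geodesic $\varphi_E$‑convexity together with $\varphi\le K$ gives
\begin{equation*}
h(y)-h(x)\le \lambda\,\varphi\bigl(h(E(\mu_1)),h(E(\mu_2))\bigr)\le \lambda K\le \frac{K}{2r}\,d(x,y).
\end{equation*}
Prolonging the same geodesic on the opposite side of $x$ and repeating with the roles of $x$ and $y$ exchanged yields the reverse inequality, so that $\abs{h(x)-h(y)}\le \frac{1}{2r}\max\{K,0\}\,d(x,y)$ on $B(p_0,r)$, the $\max$ disposing of the sign of $K$. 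Hence $h$ is Lipschitz near $p_0$; since $p_0\in\operatorname{Int}(B)$ was arbitrary, $h$ is continuous on $\operatorname{Int}(B)$.

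The Lipschitz bookkeeping above is routine; the real work, and the main obstacle, is geometric and lies in the localization. One must ensure, using only that $p_0$ is interior and that $B$ is geodesic $E$‑convex, that a geodesically convex ball sitting inside $\operatorname{Int}(B)$ exists in which the required minimizing geodesics both remain in $B$ and can be extended by the needed length, and that the endpoints $x,z$ (and their mirror images) can be realized as values $E(\mu)$ so that the defining inequality genuinely applies. When $E$ is the identity (the geodesic $\varphi$‑convex case) this is transparent; in general it is precisely the step that must be argued with care, and it is where completeness of $N$ and the convexity of the chosen neighborhood enter. An alternative, more classical route would first prove local boundedness of $h$ (upper bound from the convexity inequality along geodesics, lower bound from a midpoint/reflection argument) and then extract the Lipschitz bound, but the direct estimate is shorter exactly because the uniform bound $K$ is available from the outset.
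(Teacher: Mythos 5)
Your proposal is correct and takes essentially the same route as the paper's proof: prolong the (geodesic) segment beyond one of the two points, write the intermediate point as $\gamma(\lambda)$ (resp.\ as a convex combination), apply the defining $\varphi_E$-convexity inequality with the uniform bound $K$ to get a one-sided Lipschitz estimate with constant of order $K/r$, and then symmetrize to conclude local Lipschitz continuity. The only differences are presentational: you carry out the prolongation intrinsically with geodesics and explicitly flag the need to realize the auxiliary endpoints as values $E(\mu)$, whereas the paper performs the identical construction with linear combinations, $E(\mu_{3})=E(\mu_{2})+\tfrac{\varepsilon}{\|\mu_{2}-\mu_{1}\|}\left(E(\mu_{2})-E(\mu_{1})\right)$, tacitly working as if in a Euclidean chart and passing over that realizability issue in silence.
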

\begin{proof}
Assume that $ E(\mu^{*})\in Int(B) $, then there exists an open ball $ B(E(\mu^{*}), r)\subset Int(B) $ for some $ r>0 $. Let us choose $ s $ where $ (0<s<r) $ such that the closed ball $ \bar{B}(E(\mu^{*}), s+\varepsilon)\subset B(E(\mu^{*}), r) $ for some arbitrary small $ \varepsilon>0 $. Choose any $ E(\mu_{1}),E(\mu_{2})\in\bar{B}(E(\mu^{*}), s) $. Put $ E(\mu_{3})=E(\mu_{2})+\frac{\varepsilon}{\|\mu_{2}-\mu_{1}\|}(E(\mu_{2})-E(\mu_{1})) $ and $ t=\frac{\|\mu_{2}-\mu_{1}\|}{\varepsilon+\|\mu_{2}-\mu_{1}\|} $. Then it is obvious that $ E(\mu_{3})\in \bar{B}(E(\mu^{*}), s+\varepsilon) $ and $ E(\mu_{2})=tE(\mu_{3})+(1-t)E(\mu_{1}) $. Thus,
\begin{eqnarray}
h(E(\mu_{2}))\leq h(E(\mu_{1}))+t\varphi (h(E(\mu_{3})),h(E(\mu_{1}))) \leq h(E(\mu_{1}))+tk.\nonumber
\end{eqnarray} 
Then, the above inequality can be written as 
$$ h(E(\mu_{2}))-h(E(\mu_{1}))\leq tk\leq \frac{\|\mu_{2}-\mu_{1}\|}{\varepsilon}k= L\|E(\mu_{2})-E(\mu_{3})\|, $$ where $ L=\frac{k}{\varepsilon} $. \\ Moreover, $$ h(E(\mu_{1}))-h(E(\mu_{2}))\leq L\|E(\mu_{2})-E(\mu_{3})\|, $$ Then $$ \left| h(E(\mu_{1}))-h(E(\mu_{2}))\right\| \leq L\|E(\mu_{2})-E(\mu_{3})\|, $$  and since $ \bar{B}(E(\mu^{*}), s) $ is arbitrary, then $ h $ is continuous on $ Int(B) $.  
\end{proof}
\begin{definition}\label{de1}
	A bifunction $ \varphi: \mathbb{R}^{2}\longrightarrow \mathbb{R} $ is called sequentialy upper bounded with respect to $ E $ if $$ \sup_{i}\varphi \left( E(u_{i})-E(v_{i})\right) \leq \varphi \left( \sup_{i}E(u_{i}),\sup_{i}E(v_{i})\right) $$ for any two bounded real sequences $ \left\lbrace E(u_{i})\right\rbrace, \left\lbrace E(v_{i})\right\rbrace  $.
\end{definition}
\begin{remark}
If $ E $ is  an indenty mapping in Definition \ref{de1}, then a bifunction $ \varphi:\mathbb{R}^{2}\longrightarrow \mathbb{R} $  is called sequentialy upper bounded \cite{Shaikh2018phicomvex}.
\end{remark}
\begin{proposition}
	Suppose that $ B\subset N $ is a geodesic $ E $-convex set, and $ \{h_{i}\}_{i\in \mathbb{N}} $ are non-empty family of geodesic $ \varphi_{E} $-convex function on $ B $, where $ \varphi_{E} $ is sequentially upper bounded w.r.t.$ E $. If $ \sup_{i}h_{i}(u) $ exist for each $ u\in B $, then $ h(u)=\sup_{i}h_{i}(u) $ are also geodesic $ \varphi_{E} $-convex functions.
\end{proposition}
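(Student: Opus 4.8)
The plan is to derive the defining inequality for $h=\sup_i h_i$ by writing down the geodesic $\varphi_E$-convexity inequality for each member $h_i$ and then taking the pointwise supremum over $i$. Since $B$ is assumed to be geodesic $E$-convex, the domain requirement in the definition of geodesic $\varphi_E$-convexity is automatically met, so only the functional inequality needs checking. I would fix $\mu_1,\mu_2\in B$ and $t\in[0,1]$ and abbreviate the geodesic point $p=\gamma_{E(\mu_1),E(\mu_2)}(t)$, which lies in $B$ by geodesic $E$-convexity; note that $h(p)=\sup_i h_i(p)$ is finite by hypothesis, so every quantity appearing below is well defined.

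First I would record, for every index $i$, the inequality
$$h_i(p)\leq h_i(E(\mu_2))+t\,\varphi\bigl(h_i(E(\mu_1)),h_i(E(\mu_2))\bigr),$$
which holds because each $h_i$ is geodesic $\varphi_E$-convex. Taking the supremum over $i$ on both sides, the left-hand side collapses to $h(p)$ by the very definition of $h$ as a pointwise supremum, while the subadditivity of the supremum together with $t\geq 0$ gives
$$h(p)\leq \sup_i h_i(E(\mu_2))+t\,\sup_i \varphi\bigl(h_i(E(\mu_1)),h_i(E(\mu_2))\bigr).$$
Here $\sup_i h_i(E(\mu_2))=h(E(\mu_2))$, again by definition of $h$.

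The crucial step, and the one I expect to be the main obstacle, is to pass the supremum through the bifunction $\varphi$, that is, to bound $\sup_i \varphi\bigl(h_i(E(\mu_1)),h_i(E(\mu_2))\bigr)$ by $\varphi\bigl(\sup_i h_i(E(\mu_1)),\sup_i h_i(E(\mu_2))\bigr)=\varphi\bigl(h(E(\mu_1)),h(E(\mu_2))\bigr)$. This is precisely the content of the hypothesis that $\varphi_E$ is sequentially upper bounded with respect to $E$ (Definition \ref{de1}), applied to the two real sequences $\{h_i(E(\mu_1))\}_i$ and $\{h_i(E(\mu_2))\}_i$, which are bounded above since their suprema exist by assumption. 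Combining this bound with the previous display yields
$$h(p)\leq h(E(\mu_2))+t\,\varphi\bigl(h(E(\mu_1)),h(E(\mu_2))\bigr),$$
which is exactly the geodesic $\varphi_E$-convexity inequality for $h$ at $\mu_1,\mu_2,t$.

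I would emphasize that the only substantive uses of the hypotheses are the nonnegativity $t\geq 0$ (needed to pull $t$ out of the supremum and to keep the subadditivity step in the right direction) and, most essentially, the sequential upper boundedness of $\varphi_E$, without which the interchange of $\sup$ and $\varphi$ could fail; the assumed existence of $\sup_i h_i(u)$ for each $u\in B$ is what guarantees finiteness throughout and supplies the boundedness required to invoke Definition \ref{de1}. The one place deserving a word of care is confirming that the sequences to which the sequential-upper-boundedness condition is applied satisfy its boundedness hypothesis, which I would address by appealing directly to the finiteness of the relevant suprema.
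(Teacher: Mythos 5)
Your proposal is correct and follows essentially the same route as the paper's proof: apply the geodesic $\varphi_E$-convexity inequality to each $h_i$, take the supremum over $i$ using subadditivity and $t\geq 0$, and then invoke the sequential upper boundedness of $\varphi$ with respect to $E$ to move the supremum inside the bifunction. Your write-up is in fact more careful than the paper's, since you explicitly verify the boundedness hypothesis needed to apply the sequential-upper-boundedness condition, a point the paper passes over silently.
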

\begin{proof}
	Let $  E(u_{1}), E(u_{2})\in B $, then 
	\begin{eqnarray}
	h\left(\gamma_{E(u_{1}),E(u_{2})} (t)\right)&=&  \sup_{i} h_{i}(\gamma_{E(u_{1}),E(u_{2})} (t))\nonumber\\ &\leq& \sup_{i} h_{i} (E(u_{2}))+t\sup_{i}\varphi\left(h_{i}(E(u_{1})), h_{i}(E(u_{2}))\right)\nonumber\\ &\leq& \sup_{i} h_{i} (E(u_{2}))+t\varphi\left(\sup_{i}h_{i}(E(u_{1})), \sup_{i}h_{i}(E(u_{2}))\right)\nonumber\\&\leq&  h (E(u_{2}))+t\varphi\left(h(E(u_{1})), (E(u_{2}))\right).\nonumber 
	\end{eqnarray}
	This implies that $ h $ is a geodesic $ \varphi_{E} $-convex function.
\end{proof}
\begin{theorem}
	The function $ h:C\longrightarrow \mathbb{R} $ is a geodesic $ \varphi_{E} $-convex, where $ C $ is a geodesic $ E $-convex set. The inequality $ \varphi(h(E(\mu),h(E(\mu^{*}))\geqslant 0, \forall E(\mu)\in C $ is necessary for $ h $ to have a local minimum at $ E(\mu^{*})\in Int(C) $.
\end{theorem}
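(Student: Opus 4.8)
The plan is to exploit the defining inequality of geodesic $\varphi_{E}$-convexity along the geodesic that emanates from the purported minimizer, using the parametrization convention $\gamma_{E(\mu_{1}),E(\mu_{2})}(0)=E(\mu_{2})$ fixed earlier in the excerpt. Concretely, I would fix an arbitrary point $E(\mu)\in C$ and consider the unique geodesic $\gamma_{E(\mu),E(\mu^{*})}$, which by this convention satisfies $\gamma_{E(\mu),E(\mu^{*})}(0)=E(\mu^{*})$ and $\gamma_{E(\mu),E(\mu^{*})}(1)=E(\mu)$. Applying the definition of geodesic $\varphi_{E}$-convexity with $\mu_{1}=\mu$ and $\mu_{2}=\mu^{*}$ gives, for every $t\in[0,1]$,
$$h\left(\gamma_{E(\mu),E(\mu^{*})}(t)\right)\leq h(E(\mu^{*}))+t\,\varphi\!\left(h(E(\mu)),h(E(\mu^{*}))\right).$$

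Next I would bring in the local minimum hypothesis. Since $E(\mu^{*})\in \mathrm{Int}(C)$ is a local minimizer of $h$, there is a radius $r>0$ with $h(E(\mu^{*}))\leq h(y)$ for every $y$ in the ball $B(E(\mu^{*}),r)$. Because the geodesic is continuous and starts at $\gamma_{E(\mu),E(\mu^{*})}(0)=E(\mu^{*})$, for all sufficiently small $t>0$ the point $\gamma_{E(\mu),E(\mu^{*})}(t)$ stays inside $B(E(\mu^{*}),r)$, so that $h(E(\mu^{*}))\leq h\left(\gamma_{E(\mu),E(\mu^{*})}(t)\right)$. Chaining this lower bound with the convexity inequality above produces
$$h(E(\mu^{*}))\leq h(E(\mu^{*}))+t\,\varphi\!\left(h(E(\mu)),h(E(\mu^{*}))\right),$$
whence $0\leq t\,\varphi\!\left(h(E(\mu)),h(E(\mu^{*}))\right)$. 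Dividing by the positive scalar $t$ yields $\varphi\!\left(h(E(\mu)),h(E(\mu^{*}))\right)\geq 0$, and since $E(\mu)\in C$ was arbitrary, the asserted necessary condition follows.

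The only delicate point, and it is a mild one, is justifying that the geodesic remains inside the neighborhood of the minimizer for small parameter values; this is immediate from the continuity (indeed smoothness) of $\gamma$ at $t=0$ combined with $\gamma_{E(\mu),E(\mu^{*})}(0)=E(\mu^{*})$, which is why the interiority assumption $E(\mu^{*})\in \mathrm{Int}(C)$ is needed. Everything else reduces to substituting into the defining inequality and dividing through by $t>0$, so I do not expect any further obstruction.
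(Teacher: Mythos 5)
Your proposal is correct and follows essentially the same route as the paper's own proof: both apply the geodesic $\varphi_{E}$-convexity inequality along $\gamma_{E(\mu),E(\mu^{*})}$, use the interiority of $E(\mu^{*})$ and continuity of the geodesic to find a small parameter value (your ``sufficiently small $t>0$'' is the paper's $\zeta\in(0,1]$) at which the local-minimum inequality $h(E(\mu^{*}))\leq h(\gamma_{E(\mu),E(\mu^{*})}(t))$ holds, and then divide by the positive parameter to conclude $\varphi(h(E(\mu)),h(E(\mu^{*})))\geq 0$. No gaps; your handling of the parametrization convention and the role of the interiority hypothesis matches the paper.
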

\begin{proof}
	Due to the fact that $ C $ is a geodesic E-convex set and $ E(\mu^{*})\in Int(C) $, then $ B(E(\mu^{*}),S)\subset C $ for some $ s>0 $. Let $ E(\mu)\in C $, then 
	$$	h\left(\gamma_{E(\mu),E(\mu^{*})} (t)\right)\leq h(E(\mu^{*}))+t\varphi\left(h(E(\mu)), (E(\mu^{*}))\right).$$
	Since $ h $ attains its local minimum at $ E(\mu^{*}) $, then
	\begin{equation}\label{eq11}
	h(E(\mu^{*}))\leq h\left(\gamma_{E(\mu),E(\mu^{*})} (\zeta)\right),
	\end{equation} 
	where $ \zeta\in\left(\left.0,1 \right]  \right. $ such that $ h\left(\gamma_{E(\mu),E(\mu^{*})} (t)\right) \in B(E(\mu^{*}),S),\ \ \forall t\in [0,\zeta] $.
	Also,
	\begin{eqnarray}\label{eq12}
	h\left(\gamma_{E(\mu),E(\mu^{*})} (\zeta)\right) \leq h(E(\mu^{*}))+\zeta \varphi (h(E(\mu)),h(E(\mu^{*})),
	\end{eqnarray}
	then from (\ref{eq11}) and (\ref{eq12}), we obtain
	$ \varphi(h(E(\mu)),h(E(\mu^{*}))\geqslant 0,\ \ \forall E(\mu)\in C $.
\end{proof}
\begin{theorem}
	The function $ h:B\longrightarrow \mathbb{R} $ is a geodesic $ \varphi_{E} $-convex, where $ B $ is a geodesic $ E $-convex set and $ \varphi $ is bounded from above on $ h(B)\times h(B) $ with an upper bound $ K $ w.r.t.E. Then $ h $ is continuous on $ Int(B) $.
\end{theorem}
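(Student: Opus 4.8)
The plan is to prove continuity by establishing a local Lipschitz estimate for $h$ near an arbitrary point of $Int(B)$, following exactly the line of the proof of Theorem \ref{th2}; the only genuinely new ingredient is that the auxiliary point must now be produced by prolonging a geodesic rather than by taking an affine combination in a vector space. First I would fix $E(\mu^{*})\in Int(B)$ and, since $Int(B)$ is open, choose a geodesic ball $B(E(\mu^{*}),r)\subset Int(B)$ with $r>0$ small enough that it sits inside a totally normal neighbourhood of $E(\mu^{*})$; this guarantees that any two of its points are joined by a unique minimal geodesic, which moreover lies in $B$ because $B$ is geodesic $E$-convex. I would then pick $0<s<r$ together with a small $\varepsilon>0$ so that $\bar{B}(E(\mu^{*}),s+\varepsilon)\subset B(E(\mu^{*}),r)$.

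Second, for any two points $E(\mu_{1}),E(\mu_{2})\in\bar{B}(E(\mu^{*}),s)$ I would let $E(\mu_{3})$ be the point obtained by extending the minimal geodesic from $E(\mu_{1})$ through $E(\mu_{2})$ a further arclength $\varepsilon$, and set $t=\dfrac{d(\mu_{1},\mu_{2})}{\varepsilon+d(\mu_{1},\mu_{2})}$. By this construction $E(\mu_{2})=\gamma_{E(\mu_{3}),E(\mu_{1})}(t)$ is the point of parameter $t$ on the geodesic joining $E(\mu_{3})$ and $E(\mu_{1})$, and the triangle inequality for $d$ places $E(\mu_{3})$ inside $\bar{B}(E(\mu^{*}),s+\varepsilon)\subset B$.

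Third, applying geodesic $\varphi_{E}$-convexity along $\gamma_{E(\mu_{3}),E(\mu_{1})}$ and then invoking the upper bound $K$ on $\varphi$ gives
\begin{eqnarray}
h(E(\mu_{2}))=h\bigl(\gamma_{E(\mu_{3}),E(\mu_{1})}(t)\bigr)\leq h(E(\mu_{1}))+t\,\varphi\bigl(h(E(\mu_{3})),h(E(\mu_{1}))\bigr)\leq h(E(\mu_{1}))+tK.\nonumber
\end{eqnarray}
Since $t\leq d(\mu_{1},\mu_{2})/\varepsilon$, this yields $h(E(\mu_{2}))-h(E(\mu_{1}))\leq L\,d(\mu_{1},\mu_{2})$ with $L=K/\varepsilon$, and interchanging $\mu_{1}$ and $\mu_{2}$ gives the two-sided bound $\lvert h(E(\mu_{1}))-h(E(\mu_{2}))\rvert\leq L\,d(\mu_{1},\mu_{2})$. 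Hence $h$ is Lipschitz, and in particular continuous, on $\bar{B}(E(\mu^{*}),s)$; as $E(\mu^{*})\in Int(B)$ was arbitrary, $h$ is continuous on $Int(B)$.

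The step I expect to be the main obstacle is the geodesic prolongation in the second paragraph. In $\mathbb{R}^{n}$ the formula $E(\mu_{3})=E(\mu_{2})+\frac{\varepsilon}{\lVert\mu_{2}-\mu_{1}\rVert}(E(\mu_{2})-E(\mu_{1}))$ used in Theorem \ref{th2} is immediate, whereas on $N$ I must justify through the exponential map that the geodesic through $E(\mu_{1})$ and $E(\mu_{2})$ extends the required length without leaving the normal ball, and that the chosen $t$ indeed realises $E(\mu_{2})$ as $\gamma_{E(\mu_{3}),E(\mu_{1})}(t)$. Confining the whole construction to a totally normal neighbourhood of $E(\mu^{*})$ is what makes these geodesics unique and smoothly dependent on their endpoints, so that the distance estimate above goes through verbatim.
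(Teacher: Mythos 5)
Your proof is correct (up to defects it shares with the paper's own framework), but it follows a genuinely different route from the paper. The paper proves this theorem by reduction: take a chart $(U,\psi)$ around the point, apply Theorem \ref{th1} (diffeomorphism invariance of geodesic $\varphi_{E}$-convexity) to conclude that $h\circ\psi^{-1}$ is geodesic $\varphi_{E}$-convex on $\psi(U\cap Int(B))$, then cite Theorem \ref{th2} for the continuity of $h\circ\psi^{-1}$, and finally compose with the continuous map $\psi$. You instead rerun the Lipschitz argument of Theorem \ref{th2} intrinsically on the manifold: strongly convex normal ball, geodesic prolongation (available by completeness), triangle inequality, and the defining inequality along the prolonged minimal geodesic. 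Comparing the two: the paper's route is shorter and reuses its lemmas, but it teeters on circularity, since Theorem \ref{th2} is stated for manifolds yet proved with affine combinations such as $tE(\mu_{3})+(1-t)E(\mu_{1})$; either \ref{th2} already subsumes the present theorem (making the chart detour redundant), or it is really only the linear-space case, in which case the reduction is broken because a chart $\psi$ carries geodesics of $N$ to geodesics of the pushforward connection $\psi_{*}\bigtriangledown$, not to straight lines, so the Euclidean argument does not apply to $h\circ\psi^{-1}$. Your intrinsic argument repairs exactly this defect, and it also delivers a stronger conclusion, namely a local Lipschitz bound with constant $K/\varepsilon$. Two caveats, both inherited from the paper rather than introduced by you: (i) the auxiliary point you denote $E(\mu_{3})$ must be shown to lie in $E(B)$, i.e.\ to be the $E$-image of some $\mu_{3}\in B$, before the definition of geodesic $\varphi_{E}$-convexity can be invoked for the pair $(\mu_{3},\mu_{1})$; the paper's proof of Theorem \ref{th2} silently makes the same assumption; (ii) the distances in your estimate should be $d(E(\mu_{1}),E(\mu_{2}))$ rather than $d(\mu_{1},\mu_{2})$, the same slip the paper makes with $\|\mu_{2}-\mu_{1}\|$.
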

\begin{proof} 
	Assume that $ E(u)\in Int(B) $ and $ (U,\psi) $ is a chart containing $ E(u) $. Since $ \psi $ is a diffemorphism and by using Theorem \ref{th1} and Theorem \ref{th2}, we get 
	$ ho\psi^{-1}: \psi(U\cap Int(B))\longrightarrow \mathbb{R} $ as also geodesic $ \varphi_{E} $-convex and then it is continuous.
	Hence, 
	$ h=ho\psi^{-1}o\psi : (U\cap Int(B))\longrightarrow \mathbb{R} $ is continuous.\\
	Also, since $ E(u) $ is arbitrary, then $ h $ is continuous on $ Int(B) $.
\end{proof}
From the definition of geodesic $ \varphi_{E} $ -convex, we obtain the following proposition.
\begin{proposition}
	Assume that $ \left\lbrace \varphi^{i}:i\in \mathbb{N}\right\rbrace $  is a collection of bifunctions such that $ h: B\longrightarrow \mathbb{R} $ is a  geodesic $ \varphi_{E}^{i} $-convex function  for each $ i $. If $ \varphi^{i}\longrightarrow\varphi $ as $ i\longrightarrow\infty $, then $ h $ is also a geodesic $ \varphi_{E} $-convex function.
\end{proposition}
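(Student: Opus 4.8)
The plan is to fix the geodesic and the parameter, record the defining inequality for each index, and then pass to the limit. Concretely, I would fix arbitrary points $\mu_1, \mu_2 \in B$ and a parameter $t \in [0,1]$. Since $B$ is geodesic $E$-convex (this is built into the hypothesis that $h$ is geodesic $\varphi_E^i$-convex for every $i$, and it is independent of $i$), the geodesic $\gamma_{E(\mu_1),E(\mu_2)}$ lies in $B$, so all the quantities below are well defined and the domain clause of the target definition is already satisfied.

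For each $i \in \mathbb{N}$, the geodesic $\varphi_E^i$-convexity of $h$ yields
$$h\left(\gamma_{E(\mu_1),E(\mu_2)}(t)\right) \leq h(E(\mu_2)) + t\,\varphi^i\left(h(E(\mu_1)), h(E(\mu_2))\right).$$
The crucial observation is that the left-hand side does not depend on $i$. I would therefore let $i \to \infty$ on the right-hand side only. By the hypothesis $\varphi^i \to \varphi$, which I read as pointwise convergence of bifunctions, evaluation at the fixed point $(h(E(\mu_1)), h(E(\mu_2)))$ gives
$$\varphi^i\left(h(E(\mu_1)), h(E(\mu_2))\right) \longrightarrow \varphi\left(h(E(\mu_1)), h(E(\mu_2))\right).$$
Since a non-strict inequality $a \leq b_i$ valid for every $i$ is preserved under $b_i \to b$, I obtain
$$h\left(\gamma_{E(\mu_1),E(\mu_2)}(t)\right) \leq h(E(\mu_2)) + t\,\varphi\left(h(E(\mu_1)), h(E(\mu_2))\right).$$
As $\mu_1, \mu_2 \in B$ and $t \in [0,1]$ were arbitrary, this is exactly the inequality defining geodesic $\varphi_E$-convexity, so the conclusion follows.

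The argument is essentially a limit-passing exercise, so there is no deep obstacle. The one point requiring care is the precise meaning of the convergence $\varphi^i \to \varphi$: all that is genuinely needed is convergence at each individual evaluation point $(h(E(\mu_1)), h(E(\mu_2)))$, that is, pointwise convergence of the bifunctions. Uniform convergence, if one prefers to assume it, would also suffice but is unnecessary here. I would make this interpretation explicit at the start of the proof so that the application of the order-preservation of limits is fully justified.
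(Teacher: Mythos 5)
Your proof is correct and is precisely the argument the paper intends: the paper offers no written proof at all, merely asserting that the proposition follows ``from the definition of geodesic $\varphi_{E}$-convex,'' and your limit-passing argument (fix $\mu_1,\mu_2,t$, write the defining inequality for each $i$, use pointwise convergence and preservation of non-strict inequalities under limits) is exactly the routine verification being left to the reader. Your explicit remark that only pointwise convergence of the bifunctions is needed is a useful clarification, since the paper never specifies the mode of convergence.
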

As a special case in the above proposition, we have the following proposition.
\begin{proposition}
	Assume that $ \left\lbrace \varphi^{i}:i\in \mathbb{N}\right\rbrace $  is a collection of bifunctions such that $ h: B\longrightarrow \mathbb{R} $ is a  geodesic $ \varphi_{E}^{*} $-convex function  where $ \varphi_{E}^{*}=\sum_{l=1}^{i}\varphi_{E}^{l} $. If $ \varphi_{E}^{*} $ converges to $ \varphi_{E} $, then $ h $ is also a geodesic $ \varphi_{E} $-convex function.
\end{proposition}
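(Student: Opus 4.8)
The plan is to treat this exactly as the authors indicate—as a corollary of the preceding proposition—by reindexing the sequence of partial sums as a single new sequence of bifunctions. First I would set $\psi^{i} := \varphi_{E}^{*} = \sum_{l=1}^{i}\varphi_{E}^{l}$ for each $i\in\mathbb{N}$, so that the hypothesis ``$h$ is geodesic $\varphi_{E}^{*}$-convex'' becomes precisely the statement that $h$ is geodesic $\psi^{i}$-convex for every $i$. The second hypothesis, that the partial sums $\varphi_{E}^{*}$ converge to $\varphi_{E}$, is then exactly $\psi^{i}\to\varphi$ as $i\to\infty$. With this relabeling the conclusion follows immediately from the previous proposition, which is why the statement is advertised as a special case.

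If instead I wanted to argue directly (which is presumably how the previous proposition is itself proved), I would fix arbitrary $\mu_{1},\mu_{2}\in B$ and $t\in[0,1]$ and write down the defining inequality of geodesic $\varphi_{E}^{*}$-convexity for the $i$-th partial sum:
\begin{equation*}
h(\gamma_{E(\mu_{1}),E(\mu_{2})}(t))\leq h(E(\mu_{2}))+t\sum_{l=1}^{i}\varphi^{l}\bigl(h(E(\mu_{1})),h(E(\mu_{2}))\bigr).
\end{equation*}
The left-hand side and the term $h(E(\mu_{2}))$ do not depend on $i$, so I would pass to the limit $i\to\infty$ on both sides. Because a weak inequality ``$\leq$'' between real numbers is preserved under limits whenever the right-hand side converges, and by hypothesis the partial sums $\sum_{l=1}^{i}\varphi^{l}$ converge to $\varphi$ at the point $(h(E(\mu_{1})),h(E(\mu_{2})))$, the limit yields
\begin{equation*}
h(\gamma_{E(\mu_{1}),E(\mu_{2})}(t))\leq h(E(\mu_{2}))+t\varphi\bigl(h(E(\mu_{1})),h(E(\mu_{2}))\bigr),
\end{equation*}
which is exactly the defining inequality of geodesic $\varphi_{E}$-convexity. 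Since $\mu_{1},\mu_{2},t$ were arbitrary and $B$ is already a geodesic $E$-convex set, this completes the argument.

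The step I would watch most carefully is the limit passage: one needs the convergence ``$\varphi_{E}^{*}\to\varphi_{E}$'' to be understood pointwise at each argument of the form $(h(E(\mu_{1})),h(E(\mu_{2})))$, which is what makes the right-hand side converge and lets the non-strict inequality survive in the limit. No uniform convergence or continuity of $\varphi$ is required, only pointwise convergence together with the stability of weak inequalities under limits. This is genuinely the only nontrivial point, and it is mild; everything else is bookkeeping that coincides with the proof of the preceding proposition.
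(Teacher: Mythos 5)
Your proposal is correct and matches the paper exactly: the paper offers no written proof for this proposition, merely asserting it is a special case of the preceding one, which is precisely your reindexing $\psi^{i}:=\sum_{l=1}^{i}\varphi_{E}^{l}$. Your direct limit argument (pointwise convergence at $(h(E(\mu_{1})),h(E(\mu_{2})))$ plus preservation of weak inequalities under limits) correctly fills in the justification the paper leaves implicit for both propositions.
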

\begin{theorem}
	Consider $ h:B\longrightarrow \mathbb{R} $ to be a strictly geodesic $ \varphi_{E} $-convex, where $ B $ is a  geodesic $ E $-convex set and $ \varphi $ is antisymmetric function w.r.t. $ E $. Then $$dh_{E(\mu_{1})}\dot{\gamma}_{E(\mu_{1}),E(\mu_{2})}\neq dh_{E(\mu_{2})}\dot{\gamma}_{E(\mu_{1}),E(\mu_{2})},$$$  \forall E(\mu_{1}),E(\mu_{2})\in B $  and $ E(\mu_{1})\neq E(\mu_{2}) $.
\end{theorem}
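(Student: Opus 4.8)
The plan is to reduce the statement to a one–dimensional assertion about the composite $K(t)=h(\gamma_{E(\mu_1),E(\mu_2)}(t))$ on $[0,1]$. By the characterization proved above in this section (that $h$ is geodesic $\varphi_E$–convex if and only if $K$ is $\varphi_E$–convex on $I=[0,1]$), in its strict form $K$ is strictly $\varphi_E$–convex, and by the chain rule $K'(0)=dh_{E(\mu_2)}\dot{\gamma}_{E(\mu_1),E(\mu_2)}$ and $K'(1)=dh_{E(\mu_1)}\dot{\gamma}_{E(\mu_1),E(\mu_2)}$ (recall $\gamma(0)=E(\mu_2)$ and $\gamma(1)=E(\mu_1)$). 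Thus the claim $dh_{E(\mu_1)}\dot{\gamma}\neq dh_{E(\mu_2)}\dot{\gamma}$ is exactly $K'(1)\neq K'(0)$, and I would in fact establish the sharper $K'(0)<K'(1)$.

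First I would record the two inequalities coming from strict geodesic $\varphi_E$–convexity, one for each orientation of the geodesic. Writing $\Phi=\varphi(K(1),K(0))$, the forward inequality gives $K(t)<K(0)+t\Phi$ for $t\in(0,1)$. Applying the same definition to the reversed geodesic $\gamma_{E(\mu_2),E(\mu_1)}(t)=\gamma_{E(\mu_1),E(\mu_2)}(1-t)$ and using that $\varphi$ is antisymmetric w.r.t. $E$, so that $\varphi(K(0),K(1))=-\Phi$, yields $K(s)<K(1)-(1-s)\Phi$ for $s\in(0,1)$. Rearranged, these read $\frac{K(t)-K(0)}{t}<\Phi$ and $\frac{K(1)-K(s)}{1-s}>\Phi$ for all $t,s\in(0,1)$; note the antisymmetry is exactly what makes the two orientations produce the \emph{same} threshold $\Phi$, which is the crux of the comparison.

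Next I would pass to the boundary. Differentiability of $K$ and letting $t\to0^+$, $s\to1^-$ give $K'(0)\le\Phi\le K'(1)$, and hence $K'(0)\le K'(1)$. To upgrade this to a strict inequality, fix an interior value $t_0\in(0,1)$. The forward inequality already gives $\frac{K(t_0)-K(0)}{t_0}<\Phi$ and the backward one $\frac{K(1)-K(t_0)}{1-t_0}>\Phi$; since the difference quotients $t\mapsto\frac{K(t)-K(0)}{t}$ and $s\mapsto\frac{K(1)-K(s)}{1-s}$ are monotone (by strict convexity of $K$ on the corresponding subarcs of the geodesic), one obtains $K'(0)\le\frac{K(t_0)-K(0)}{t_0}<\Phi$ and $\Phi<\frac{K(1)-K(t_0)}{1-t_0}\le K'(1)$, so $K'(0)<K'(1)$ as required.

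I expect the strictness step to be the main obstacle. Taking limits in strict inequalities only returns weak ones, so $K'(0)\neq K'(1)$ cannot be read off from the single endpoint chord: a function lying strictly below one chord can still have equal derivatives at both ends (for instance $K(t)=-\big(t(1-t)\big)^2$ on $[0,1]$ satisfies $K'(0)=K'(1)=0$). The argument therefore genuinely needs strict convexity along the \emph{interior} subarcs of $\gamma_{E(\mu_1),E(\mu_2)}$, which in turn presupposes that the intermediate geodesic points are again of the form $E(\mu)$ so that the defining inequality applies to them — an assumption implicit throughout the $E$–convex framework. Making this reduction precise, and verifying the monotonicity of the difference quotients in the $\varphi_E$ setting, is where the care must be spent.
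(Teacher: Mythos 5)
Your central mechanism --- apply the defining inequality to both orientations of $\gamma_{E(\mu_1),E(\mu_2)}$, use the reversal identity $\gamma_{E(\mu_2),E(\mu_1)}(t)=\gamma_{E(\mu_1),E(\mu_2)}(1-t)$ together with antisymmetry of $\varphi$ so that both orientations are measured against the \emph{same} threshold $\Phi=\varphi\left(h(E(\mu_1)),h(E(\mu_2))\right)$ --- is exactly the mechanism of the paper's proof; the paper merely packages it as a contradiction argument. However, the paper's two key displays are the pointwise \emph{strict} inequalities $dh_{E(\mu_1)}\dot{\gamma}_{E(\mu_1),E(\mu_2)}<\varphi\left(h(E(\mu_1)),h(E(\mu_2))\right)$ and $dh_{E(\mu_2)}\dot{\gamma}_{E(\mu_2),E(\mu_1)}<\varphi\left(h(E(\mu_2)),h(E(\mu_1))\right)$, asserted with no justification beyond ``$h$ is geodesic $\varphi_E$-convex''. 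As you correctly observe, what actually follows from the definition by letting $t\to 0^{+}$ (resp.\ $s\to 1^{-}$) is only the weak sandwich $K'(0)\le\Phi\le K'(1)$, and with weak inequalities the paper's contradiction evaporates: one concludes only that both differentials equal $\Phi$. So the step at which your proof stalls is precisely the step the paper passes over silently; you have reproduced the paper's argument in its honest form and located its gap.

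That gap is genuine, and your proposed repair (monotone difference quotients on subarcs) cannot close it, for the reason you yourself flag: $\varphi_E$-convexity constrains $K$ only against its two endpoint values, it yields no chord inequality over subintervals, and the interior points $\gamma(t)$ need not lie in $E(B)$, so the definition never applies to them. In fact your own example defeats the statement. Take $N=\mathbb{R}$, $B=[0,1]$, $E(u)=0$ for $u\le\frac{1}{2}$ and $E(u)=1$ for $u>\frac{1}{2}$ (so $E(B)=\{0,1\}$ and $B$ is geodesic $E$-convex), $\varphi(a,b)=a-b$ (antisymmetric), and $h(u)=-\left(u(1-u)\right)^{2}$. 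The only geodesics the definition ever tests run between $0$ and $1$, where strict convexity for $t\in(0,1)$ reads $h(t)<h(0)+t\varphi(0,0)=0$, which holds; yet $h'(0)=h'(1)=0$, so the two differentials in the conclusion coincide. (If one instead insists on strictness at $t=1$, as the paper's definition literally demands, antisymmetry makes the hypothesis unsatisfiable: the two orientations force $\Phi>h(E(\mu_1))-h(E(\mu_2))$ and $\Phi<h(E(\mu_1))-h(E(\mu_2))$ simultaneously, and the theorem is vacuous.) So the strictness step is not a technicality to be patched but a missing hypothesis: without something like ordinary strict convexity of $h$ along the geodesic, or $E(B)$ meeting the interior of each geodesic, neither your argument nor the paper's can be completed.
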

\begin{proof}
	Since $ \gamma_{E(\mu_{2}),E(\mu_{1})}(t)= \gamma_{E(\mu_{1}),E(\mu_{2})}(1-t), \ \ \forall t\in[0,1] $, then
$$	dh_{E(\mu_{2})}\dot{\gamma}_{E(\mu_{2}),E(\mu_{1})} =-dh_{E(\mu_{2})}\dot{\gamma}_{E(\mu_{1}),E(\mu_{2})}. $$
	By contradition, let $$ dh_{E(\mu_{1})}\dot{\gamma}_{E(\mu_{1}),E(\mu_{2})} =dh_{E(\mu_{2})}\dot{\gamma}_{E(\mu_{1}),E(\mu_{2})}, $$
	but  $ h $ is a geodesic $ \varphi_{E} $-convex function, then 
	\begin{equation}\label{eq13}
	dh_{E(\mu_{1})}\dot{\gamma}_{E(\mu_{1}),E(\mu_{2})}<\varphi (h(E(\mu_{1})),h(E(\mu_{2})) .
	\end{equation}
Also,	
	$$dh_{E(\mu_{2})}\dot{\gamma}_{E(\mu_{2}),E(\mu_{1})}<\varphi (h(E(\mu_{2})),h(E(\mu_{1})).$$\\
	
	On the other hand, 
	
$$	dh_{E(\mu_{2})}\dot{\gamma}_{E(\mu_{2}),E(\mu_{1})} =-dh_{E(\mu_{2})}\dot{\gamma}_{E(\mu_{1}),E(\mu_{2})},$$
	then 
	\begin{eqnarray}\label{eq14}
	 -dh_{E(\mu_{2})}\dot{\gamma}_{E(\mu_{1}),E(\mu_{2})}<\varphi (h(E(\mu_{2})),h(E(\mu_{1})).
	 \end{eqnarray}
	 Moreover, since $ \varphi $ is antisymmetry function, then (\ref{eq14}) becomes  
	 $$dh_{E(\mu_{2})}\dot{\gamma}_{E(\mu_{1}),E(\mu_{2})}>\varphi (h(E(\mu_{1})),h(E(\mu_{2})),$$
	 hence,
	 \begin{equation}\label{eq15}
	  dh_{E(\mu_{1})}\dot{\gamma}_{E(\mu_{1}),E(\mu_{2})}>\varphi (h(E(\mu_{1})),h(E(\mu_{2})).
	  	 \end{equation}
	  From (\ref{eq13}) and (\ref{eq15}), we obtain a contradition, then 
	  $dh_{E(\mu_{1})}\dot{\gamma}_{E(\mu_{1}),E(\mu_{2})}\neq dh_{E(\mu_{2})}\dot{\gamma}_{E(\mu_{1}),E(\mu_{2})}.$  
\end{proof}
\section{$ \varphi_{E} $-Epigraphs}
In this section, $ \varphi_{E} $- epigraphs are introduced on complete Riemannian manifolds, and a characterization of geodesic $ \varphi_{E} $-convex functions in terms of their $ \varphi_{E} $-epigraphs is obtained.
\begin{definition}
	A set $ B\subset N\times \mathbb{R} $ is called a geodesic $ \varphi_{E} $-convex set if 
	$$\left(\gamma_{E(u_{1}),E(u_{2})}(t),v_{2}+t\varphi(v_{1},v_{2}) \right)\in B, \ \ \forall (u_{i},v_{i})\in B , t\in[0,1]. $$
\end{definition}
Therefore, a $ \varphi_{E} $- epigraph of function $ h $ is defined by 
$$ epi_{\varphi_{E}}(h)=\left\lbrace (u,v)\in E(N)\times \mathbb{R}: h(u)\leq v\right\rbrace .$$ 
\begin{theorem}
	Consider $ B\subset N $ to be a geodesic $ E $-convex set, and $ \varphi $ is non-decreasing. The set $ epi_{\varphi_{E}}(h)$ is a geodesic $ \varphi_{E} $-convex, if and only if $ h:B\longrightarrow \mathbb{R }$ is a geodesic $ \varphi_{E} $-convex function.
\end{theorem}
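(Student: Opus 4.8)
The plan is to prove the two implications separately, in each case invoking just one definition directly: the defining inequality of a geodesic $\varphi_{E}$-convex function for one direction, and the membership condition of a geodesic $\varphi_{E}$-convex set (applied to the set $epi_{\varphi_{E}}(h)$) for the other. No machinery beyond these two definitions and the monotonicity hypothesis on $\varphi$ is needed.

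First I would prove sufficiency, that is, if $h$ is geodesic $\varphi_{E}$-convex then $epi_{\varphi_{E}}(h)$ is a geodesic $\varphi_{E}$-convex set. I would take two arbitrary points $(u_{1},v_{1}),(u_{2},v_{2})\in epi_{\varphi_{E}}(h)$, so that $u_{i}\in E(N)$, say $u_{i}=E(\mu_{i})$, with $h(E(\mu_{i}))\leq v_{i}$ for $i=1,2$. The goal is to verify that $\bigl(\gamma_{E(\mu_{1}),E(\mu_{2})}(t),\,v_{2}+t\varphi(v_{1},v_{2})\bigr)$ lies in $epi_{\varphi_{E}}(h)$, i.e.\ that $h(\gamma_{E(\mu_{1}),E(\mu_{2})}(t))\leq v_{2}+t\varphi(v_{1},v_{2})$. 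Starting from the geodesic $\varphi_{E}$-convexity of $h$, namely $h(\gamma_{E(\mu_{1}),E(\mu_{2})}(t))\leq h(E(\mu_{2}))+t\varphi(h(E(\mu_{1})),h(E(\mu_{2})))$, I would bound the right-hand side by $v_{2}+t\varphi(v_{1},v_{2})$ using three facts in turn: $h(E(\mu_{2}))\leq v_{2}$; the nonnegativity of $t\in[0,1]$; and the monotonicity of $\varphi$ combined with $h(E(\mu_{i}))\leq v_{i}$ to get $\varphi(h(E(\mu_{1})),h(E(\mu_{2})))\leq\varphi(v_{1},v_{2})$. Chaining these inequalities gives the desired membership.

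Next I would prove necessity, that is, if $epi_{\varphi_{E}}(h)$ is a geodesic $\varphi_{E}$-convex set then $h$ is geodesic $\varphi_{E}$-convex. The key observation is that for any $\mu_{1},\mu_{2}\in B$ the graph points $(E(\mu_{1}),h(E(\mu_{1})))$ and $(E(\mu_{2}),h(E(\mu_{2})))$ belong to $epi_{\varphi_{E}}(h)$, since trivially $h(E(\mu_{i}))\leq h(E(\mu_{i}))$. Applying the geodesic $\varphi_{E}$-convexity of the set to these two points, with the choice $v_{i}=h(E(\mu_{i}))$, yields $\bigl(\gamma_{E(\mu_{1}),E(\mu_{2})}(t),\,h(E(\mu_{2}))+t\varphi(h(E(\mu_{1})),h(E(\mu_{2})))\bigr)\in epi_{\varphi_{E}}(h)$. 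Reading off the epigraph condition for this point gives exactly $h(\gamma_{E(\mu_{1}),E(\mu_{2})}(t))\leq h(E(\mu_{2}))+t\varphi(h(E(\mu_{1})),h(E(\mu_{2})))$, which is precisely the defining inequality of a geodesic $\varphi_{E}$-convex function. This direction uses no monotonicity, as $\varphi$ is evaluated only at the graph values themselves.

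The main obstacle, and the only place the hypothesis on $\varphi$ enters, is the monotonicity step in the sufficiency direction. I would make explicit that ``non-decreasing'' is to be understood as non-decreasing in each argument, so that $h(E(\mu_{1}))\leq v_{1}$ and $h(E(\mu_{2}))\leq v_{2}$ indeed imply $\varphi(h(E(\mu_{1})),h(E(\mu_{2})))\leq\varphi(v_{1},v_{2})$; without this the epigraph need not be geodesic $\varphi_{E}$-convex even when $h$ is. Since both implications reduce to a short chain of inequalities once the right points are substituted, I expect the write-up to be brief, with the care concentrated on stating the monotonicity convention and on identifying the correct test points in the necessity argument.
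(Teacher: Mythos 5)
Your proposal is correct and takes essentially the same approach as the paper: both directions use the same test points (the graph points $(E(\mu_{i}),h(E(\mu_{i})))$ for one implication, arbitrary epigraph points $(u_{i},v_{i})$ for the other) and the same chains of inequalities. The only difference is one of care, not substance --- you state explicitly that the non-decreasing hypothesis on $\varphi$ (in each argument) is what licenses the step $\varphi\left(h(E(\mu_{1})),h(E(\mu_{2}))\right)\leq\varphi(v_{1},v_{2})$, whereas the paper uses this silently in passing from $h(E(u_{2}))+t\varphi\left(h(E(u_{1})),h(E(u_{2}))\right)$ to $v_{2}+t\varphi(v_{1},v_{2})$.
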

\begin{proof}
	Let $ u_{1},u_{2}\in B $ and $ t\in [0,1] $, and since $ B $ is an $ E $-convex set, then $ E(u_{1}),E(u_{2})\in E(B)\subseteq B $. Hence, $$ (E(u_{1}),h(E(u_{1}))),(E(u_{2}),h(E(u_{2})))\in epi_{\varphi_{E}}(h) .$$ Due to the fact that $ epi_{\varphi_{E}} (h) $ is a geodesic $ \varphi_{E} $-convex set, then $$\left(\gamma_{E(u_{1}),E(u_{2})}(t), h(E(u_{2}))+t\varphi h(E(u_{1})),h(E(u_{2})) \right) \in epi_{\varphi_{E}}(h).$$
	This implies to 
$ h\left( 	\gamma_{E(u_{1}),E(u_{2})}(t)\right) \leq h(E(u_{2}))+t\varphi h(E(u_{1})),h(E(u_{2})) $.
Consequently, $ h $ is a geodesic $ \varphi_{E} $-convex function. \\

Now, let us condiser that $ (u_{1}^{*},v_{1}), (u_{2}^{*},v_{2})\in epi_{\varphi_{E}}(h) $, then $ u_{1}^{*}, u_{1}^{*}\in E(B) $ which means that there are $ u_{1},u_{2}\in B $ such that $ E(u_{1})=u_{1}^{*} $ and $ E(u_{2})=u_{2}^{*} $. Hence, $ h(E(u_{1}))\leq v_{1}, h(E(u_{2}))\leq v_{2} $ and, since $ h $ is a geodesic $ \varphi_{E} $-convex function, then 
 \begin{eqnarray}
 h\left(\gamma_{E(u_{1}),E(u_{2})}(t)  \right) &\leq&  h(E(u_{2}))+t\varphi h(E(u_{1})),h(E(u_{2}))\nonumber\\ &\leq& v_{2}+t\varphi(v_{1},v_{2}),\nonumber
 \end{eqnarray}
 which implies that
 $ \left(\gamma_{E(u_{1}),E(u_{2})}(t), v_{2}+t\varphi(v_{1},v_{2})\right)\in epi_{\varphi_{E}}(h),\ \ \forall t\in[0,1] $. That is, $ epi_{\varphi_{E}}(h) $ is a geodesic $ \varphi_{E} $-convex set. 
\end{proof}
\begin{theorem}
	Consider $ \left\lbrace B_{i}, i\in I\right\rbrace $ to be a family of geodesic $ \varphi_{E} $-convex sets, then $ B=\cap_{i\in I}B_{i} $  is also a geodesic $ \varphi_{E} $-convex set.
\end{theorem}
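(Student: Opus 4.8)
The plan is to verify directly that $B=\cap_{i\in I}B_{i}$ satisfies the defining membership condition for a geodesic $\varphi_{E}$-convex set. The key observation is that this condition is purely a membership (pointwise) requirement whose data---the geodesic $\gamma_{E(u_{1}),E(u_{2})}$ and the real number $v_{2}+t\varphi(v_{1},v_{2})$---depend only on the two chosen points and on the fixed maps $E$ and $\varphi$, not on the index $i$. Such conditions are automatically stable under arbitrary intersections, exactly as in the analogous fact noted earlier in Remark 2.7 for t-convex sets.

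First I would fix two arbitrary points $(u_{1},v_{1}),(u_{2},v_{2})\in B$ together with a parameter $t\in[0,1]$. By the definition of the intersection, these two points belong to $B_{i}$ for every index $i\in I$ simultaneously. Next, for each fixed $i$, I would invoke the hypothesis that $B_{i}$ is a geodesic $\varphi_{E}$-convex set; applying its defining property to the points $(u_{1},v_{1}),(u_{2},v_{2})\in B_{i}$ yields
$$\left(\gamma_{E(u_{1}),E(u_{2})}(t),\,v_{2}+t\varphi(v_{1},v_{2})\right)\in B_{i}.$$

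Finally, since the point on the left-hand side is the same for every $i$ and lies in each $B_{i}$, it must lie in their intersection, i.e.
$$\left(\gamma_{E(u_{1}),E(u_{2})}(t),\,v_{2}+t\varphi(v_{1},v_{2})\right)\in\bigcap_{i\in I}B_{i}=B.$$
As the two points and $t\in[0,1]$ were arbitrary, this is precisely the statement that $B$ is a geodesic $\varphi_{E}$-convex set, completing the argument.

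I do not expect a genuine obstacle here: the result is essentially a formal consequence of the definition, and the proof is a routine three-line intersection argument. The only point requiring any care is to notice the index-independence of both $\gamma_{E(u_{1}),E(u_{2})}(t)$ and $v_{2}+t\varphi(v_{1},v_{2})$, which is what allows the conclusion drawn separately in each $B_{i}$ to be collected into a single statement about $B$. No continuity, completeness, or diffeomorphism hypotheses are needed, and the family $\{B_{i}\}_{i\in I}$ may be of arbitrary cardinality.
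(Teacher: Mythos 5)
Your proof is correct and is essentially identical to the paper's: both fix two points of the intersection, apply the defining property in each $B_{i}$ (noting that the point $\bigl(\gamma_{E(u_{1}),E(u_{2})}(t),\,v_{2}+t\varphi(v_{1},v_{2})\bigr)$ does not depend on $i$), and conclude it lies in $\cap_{i\in I}B_{i}$. Your remark on index-independence makes explicit the one observation the paper leaves implicit, but the argument is the same routine intersection argument.
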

\begin{proof}
Let $ (\mu_{1},\nu_{1}), (\mu_{2},\nu_{2})\in \cap_{i\in I}B_{i} $, then $ (\mu_{1},\nu_{1}), (\mu_{2},\nu_{2})\in B_{i}, \forall i\in I $. Hence, 
$$\left(\gamma_{E(\mu_{1}),E(\mu_{2})}(t),\nu_{2}+t\varphi (\nu_{1},\nu_{2}) \right) \in B_{i} \ \  \forall  t\in[0,1].$$
$ \Rightarrow $
$$\left(\gamma_{E(\mu_{1}),E(\mu_{2})}(t),\nu_{2}+t\varphi (\nu_{1},\nu_{2}) \right) \in \cap_{i\in I}B_{i} \ \ \forall t\in[0,1].$$
This implies $ \cap_{i\in I}B_{i} $ is a geodesic $ \varphi_{E} $-convex function.
\end{proof}
By using the above theorem, we can obtain the following corollary
\begin{corollary}
Let $ \left\lbrace h_{i}, i\in I\right\rbrace $ be a family of geodesic $ \varphi_{E} $-convex functions defined on a geodesic $ E $-convex set $ B\subset N $ which is bounded above, and $ \varphi $ is non-decreasing. If the $ E $-epigraphs $ epi_{\varphi_{E}}(h_{i}) $ are geodesic $ \varphi_{E} $-convex sets, then $ h=\sup_{i\in I} h_{i} $ is also a geodesic $ \varphi_{E} $-convex function on $ B $.
\end{corollary}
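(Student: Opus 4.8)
The plan is to reduce the statement to the two preceding results on $ \varphi_{E} $-epigraphs: the characterization theorem (which asserts that $ epi_{\varphi_{E}}(g) $ is a geodesic $ \varphi_{E} $-convex set if and only if $ g $ is a geodesic $ \varphi_{E} $-convex function, provided $ \varphi $ is non-decreasing) and the intersection theorem (which asserts that an arbitrary intersection of geodesic $ \varphi_{E} $-convex sets is again a geodesic $ \varphi_{E} $-convex set). The bridge between them is the elementary set identity
$$ epi_{\varphi_{E}}(h) = \bigcap_{i\in I} epi_{\varphi_{E}}(h_{i}), $$
which I would establish first.

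First I would verify this identity directly from the definition of the epigraph. Since the family is bounded above, $ h(u)=\sup_{i\in I} h_{i}(u) $ is a well-defined real number for each $ u\in E(B) $. Then for any $ (u,v)\in E(N)\times \mathbb{R} $, the pair lies in $ epi_{\varphi_{E}}(h) $ exactly when $ h(u)\leq v $, i.e. when $ \sup_{i} h_{i}(u)\leq v $, which holds if and only if $ h_{i}(u)\leq v $ for every $ i\in I $, i.e. if and only if $ (u,v)\in epi_{\varphi_{E}}(h_{i}) $ for every $ i $. This is precisely the statement that $ (u,v) $ belongs to $ \bigcap_{i\in I} epi_{\varphi_{E}}(h_{i}) $, so the two sets coincide.

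Next, by hypothesis each $ epi_{\varphi_{E}}(h_{i}) $ is a geodesic $ \varphi_{E} $-convex set (this also follows from the characterization theorem since each $ h_{i} $ is geodesic $ \varphi_{E} $-convex and $ \varphi $ is non-decreasing). Applying the intersection theorem to the family $ \{epi_{\varphi_{E}}(h_{i})\}_{i\in I} $, I conclude that $ epi_{\varphi_{E}}(h)=\bigcap_{i\in I} epi_{\varphi_{E}}(h_{i}) $ is itself a geodesic $ \varphi_{E} $-convex set. Finally, invoking the characterization theorem once more, now in the reverse direction applied to $ h $, yields that $ h $ is a geodesic $ \varphi_{E} $-convex function on $ B $, as required.

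The only delicate point is the well-definedness of $ h $: the boundedness-above hypothesis is exactly what guarantees that the pointwise supremum is finite, so that $ h $ is a genuine real-valued function and its epigraph behaves as expected. Once finiteness is secured, the argument is purely formal, and the whole proof rests on recognizing the epigraph-of-supremum identity that converts the supremum operation into a set intersection, allowing the two earlier theorems to be chained together.
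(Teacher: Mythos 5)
Your proposal is correct and follows exactly the route the paper intends: the paper derives the corollary from the preceding intersection theorem together with the epigraph characterization theorem, which is precisely your chain of the identity $epi_{\varphi_{E}}(h)=\bigcap_{i\in I}epi_{\varphi_{E}}(h_{i})$, the intersection theorem, and then the characterization in the reverse direction. In fact the paper leaves all of these steps implicit ("By using the above theorem, we can obtain the following corollary"), so your write-up supplies the details, including the well-definedness of the supremum, in the manner the authors clearly had in mind.
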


\section*{Data availability statement}

My manuscribt has no associated data


\end{document}